\def\bfg{{\bf g}}
\def\bfd{{\bf d}}
\def\Prox{{\mbox{Prox}}}
\crefname{hypothesis}{Hypothesis}{Hypotheses}
\title{Convergence of ZH-type nonmonotone descent method for Kurdyka-{\L}ojasiewicz optimization problems\thanks{Submitted to the editors DATE.
\funding{The second author's work was funded by the Natural Science Foundation of Guangdong Province under project 2023A1515111167. The third author's work was funded by the National Natural Science Foundation of China under project 12371299. 
Qian and Qi were funded by Hong Kong RGC General Research Fund PolyU/15309223 and PolyU AMA Project 230413007.}}}
\author{Yitian Qian\thanks{Department of Applied Mathematics, The Hong Kong Polytechnic University, Hong Kong 
  (\email{yitian.qian@polyu.edu.hk}, \email{houduo.qi@polyu.edu.hk}).}
\and Ting Tao\thanks{School of Mathematics, Foshan University, Foshan 
  (\email{taoting@fosu.edu.cn}).}
\and Shaohua Pan\thanks{School of Mathematics, South China University of Technology, Guangzhou
  (\email{shhpan@scut.edu.cn}).}
\and Houduo Qi\footnotemark[2]}
\newtheorem{aassumption}{Assumption}
\begin{document}

\maketitle

\begin{abstract}
We propose a novel iterative framework for minimizing a proper lower semicontinuous Kurdyka-{\L}ojasiewicz (KL) function $\Phi$. It comprises a Zhang-Hager (ZH-type) nonmonotone decrease condition and a relative error condition. Hence, the sequence generated by the ZH-type nonmonotone descent methods will fall within this framework. Any sequence conforming to this framework is proved to converge to a critical point of $\Phi$. If in addition $\Phi$ has the KL property of exponent $\theta\!\in(0,1)$ at the critical point, the convergence has a linear rate for $\theta\in(0,1/2]$ and a sublinear rate of exponent $\frac{1-\theta}{1-2\theta}$ for $\theta\in(1/2,1)$. To the best of our knowledge, this is the first work to establish the full convergence of the iterate sequence generated by a ZH-type nonmonotone descent method for nonconvex and nonsmooth optimization problems. The obtained results are also applied to achieve the full convergence of the iterate sequences produced by the proximal gradient method and Riemannian gradient method with the ZH-type nonmonotone line-search.
\end{abstract}

\begin{keywords}
 ZH-type nonmonotone descent method, full convergence, KL property, nonconvex and nonsmooth optimization, proximal gradient methods.
 \end{keywords}

\begin{MSCcodes}
 90C26, 65K05, 49M27
\end{MSCcodes}

\section{Introduction}\label{sec1}

For a descent method for nonconvex and nonsmooth optimization problems, 
it has become a common task to establish the full convergence of its iterate sequence under the KL property of objective functions. This task has been strongly supported and motivated by the work \cite{Absil05} for gradient-related methods, 
\cite{Attouch09} for proximal point algorithms, \cite{Attouch13} for forward-backward algorithms, \cite{Attouch10,Bolte14} for  proximal alternating minimization algorithms, and \cite{Xu13} for block coordinate descent algorithms, to just name a few of the representative papers in this research direction. The convergence analysis of all these works is based on the (sufficiently) monotone decrease of objective value sequences. Although the (sufficiently) monotone decrease is crucial to achieve the full convergence, it also leads to relatively short step sizes. 
To overcome this drawback, nonmonotone variants of the sufficient decrease are often considered. In the context of line-search methods, Grippo et al. \cite{Grippo86} first proposed a nonmonotone line-search procedure by monitoring the maximum objective value attained by the latest iterates
(referred to as GLL-type in this paper); Zhang and Hager \cite{ZhangH04} later proposed a different strategy of nonmonotone line-search (referred to as the ZH-type in this paper) by taking a weighted average of the current objective value and those of the past iterates. These two types of nonmonotone line-search procedures are received as the most popular choices although they can be cast into the general framework proposed in \cite{Sachs11}. In the past decade, they have also been aligned with algorithms for nonconvex and nonsmooth optimization problems, 
such as composite optimization problems of minimizing the sum of a smooth function and a nonsmooth one \cite{Kanzow22,Marchi23,Li15,Themelis18}, Riemannian manifold optimization problems \cite{Wen13,Oviedo22}, and DC programs \cite{Ferreira21,Lu19}.   

On the one hand, nonmonotone line-search procedures bring well-recognized 
numerical benefits such as increasing the possibility of seeking better critical points, and potentially accelerating the convergence of algorithms \cite{Toint96}. 
On the other hand, the nonmonotone decrease of objective value sequences brings a great challenge for the full convergence analysis of the iterate sequence of the concerned algorithms. Recently, Qian and Pan \cite{QianPan23} investigated the full convergence of the iterate sequence conforming to an iterative framework proposed by the GLL-type decreasing condition for KL optimization problems, and achieved its full convergence under a condition that is shown to be sufficient and necessary if objective functions are weakly convex on a neighborhood of the set of critical points. We notice that the ZH-type nonmonotone line-search procedure has been widely applied to nonconvex and nonsmooth optimization problems (see, e.g., \cite{Marchi23,Wen13,Themelis18,Oviedo22,Li15}), but as far as we know, there is no work to study the full convergence of its iterate sequence even for smooth optimization problems \cite{ZhangH04,Grapiglia17,Sachs11}. This paper aims at resolving this open problem via a novel iterative framework. 
 
In the following, we first describe the novel procedure, then explain the main results of the paper. To this end, consider the nonconvex and nonsmooth problem 
 \begin{equation}\label{prob}
  \min_{x\in\mathbb{X}}\,\Phi(x),
 \end{equation}
 where $\mathbb{X}$ represents a finite dimensional real vector space endowed with the inner product $\langle \cdot,\cdot \rangle$ and its induced norm $\|\cdot\|$, and $\Phi\!:\mathbb{X}\to\overline{\mathbb{R}}:=(-\infty,\infty]$ is a proper lower semicontinuous (lsc) function that is bounded below on its domain ${\rm dom}\,\Phi$. We are interested in nonmonotone descent methods for \eqref{prob} to produce an iterate sequence $\{x^k\}_{k\in\mathbb{N}}\subset{\rm dom}\,\Phi$ complying with the following conditions: 
 \begin{itemize}
 \item [{\bf H1.}] For each $k\!\in\!\mathbb{N}$, $\Phi(x^{k})\!+\!a_{k}\|x^k\!-\!x^{k-1}\|^2\!\le\! C_{k-1}$, where $a_k\ge\underline{a}$ for some $\underline{a}\!>\!0$, and
 $C_{k}=(1\!-\!\tau_{k})C_{k\!-\!1}\!+\tau_{k}\Phi(x^{k})$ with $C_0=\Phi(x^0)$ and $\tau_{k}\!\in\![\tau,1]$ for a $\tau\!\in\!(0,1]$.

 \item[{\bf H2.}] There exists a nonnegative integer $k_1$ such that for each $k\ge k_1$, 
 \[
  {\rm dist}(0,\partial\Phi(x^{k}))\le\frac{1}{b_{k}}\sum_{i=k-k_1}^{k+k_1}\|x^{i}\!-\!x^{i-1}\|+\varepsilon_{k} 
  \]
  with $b_{k}>0$ and $\varepsilon_{k}\ge0$, where $\partial\Phi(x^{k})$ is the subdifferential of $\Phi$ at $x^{k}$. 

 \item[{\bf H3.}] There exists a convergent subsequence $\{x^{k_j}\}_{j\in\mathbb{N}}$ with $\lim_{j\to\infty}x^{k_j}=\overline{x}\in{\rm dom}\,\Phi$ such that $\limsup_{j\to\infty}\Phi(x^{k_j})\le\Phi(\overline{x})$.
 \end{itemize}
 Unless otherwise stated, the above $\{a_k\}_{k\in\mathbb{N}},\{b_k\}_{k\in\mathbb{N}}$ and $\{\varepsilon_k\}_{k\in\mathbb{N}}$ are assumed to satisfy 
 \begin{equation}\label{abk-cond}
  \sum_{k=1}^{\infty}b_k=\infty,\ 
  \overline{B} := \sup_{\mathbb{N}\ni k\ge k_1}\frac{1}{b_k}\sum_{i=k-k_1}^{k+k_1}\frac{1}{\sqrt{a_i}}<\infty\ \ {\rm and}\ \ \sum_{k=1}^{\infty}\varepsilon_k<\infty.
 \end{equation}
 Here, ``$:=$'' means ``define''. The recursion relation of $\{C_k\}_{k\in\mathbb{N}}$ in H1 comes from the ZH-type nonmonotone line-search procedure, and when $C_k$ takes $\Phi(x^k)$ or $\tau_k\equiv 1$, condition H1 degenerates to the sufficiently monotone descent condition. In condition H2, the introduction of a nonnegative integer $k_1$ aims to bound ${\rm dist}(0,\partial\Phi(x^{k}))$ from above by $2k_1\!+\!1$ successive iterate errors. As will be shown in Section \ref{sec4.2}, this plays a crucial role in the convergence analysis of nonmonotone Riemannian gradient descent methods. When $\Phi$ is a Lipschitz smooth function on $\mathbb{X}=\mathbb{R}^n$, the iterate sequence $\{x^k\}_{k\in\mathbb{N}}$ generated by the nonmonotone line-search algorithms
 in the style of ZH-type \cite{ZhangH04,Grapiglia17,Sachs11} under the direction assumption there falls within the above framework with $\varepsilon_k\equiv 0$.
We include a proof in Appendix.

The main contribution of this work is to resolve the aforementioned open problem by establishing the full convergence of any sequence $\{x^k\}_{k\in\mathbb{N}}$ conforming to conditions H1-H3 under the KL property of $\Phi$, and its R-linear convergence rate under the KL property of $\Phi$ with exponent $1/2$. The difficulty to achieve this goal is how to divide an iterate sequence $\{x^k\}_{k\in\mathbb{N}}$ obeying conditions H1-H3 into appropriate subsequences with the help of the objective values. Different from the iterative framework proposed in \cite{QianPan23} 
for the GLL-type nonmonotone decrease condition, the nonmonotone decrease condition in H1 does not provide any hint on the division. To overcome the difficulty, we make full use of the recursion formula on $C_k$ to skillfully divide $\{x^k\}_{k\in\mathbb{N}}$ into two subsequences, and then establish a recursion relation of the auxiliary sequence $\{\Xi_k\}_{k\in\mathbb{N}}$ in Lemma \ref{Lemma-Bound-Xi}. 
Due to the remarkable difference between these two nonmonotone decrease conditions
as showcased in the example \cite[(1.3)]{ZhangH04}, the convergence analysis here is also completely different from the one developed in \cite{QianPan23}.
A technical comparison on the sequence splitting scheme proposed in this paper
and that of \cite{QianPan23} can be found in Remark~\ref{Remark-GLL-ZH}. 

As discussed in \cite{Attouch10,Ioffe09}, there are a large number of nonconvex and nonsmooth optimization problems involving the KL functions. Furthermore, by \cite[Proposition 1]{Robinson81} and \cite[Proposition 2 (i) \& Remark 1 (b)]{LiuPan24}, the piecewise linear-quadratic KL functions with the composite structure of \cite{LiuPan24} necessarily satisfy the KL property of exponent $1/2$. Thus, the obtained convergence results will have a wide range of applications. As a demonstration, Section~\ref{sec4} achieves the full convergence of the iterate sequences produced by two existing algorithms: the proximal gradient method of 
 \cite{Marchi23} for composite optimization and
 the Riemannian gradient method of \cite{Wen13, Oviedo22} for
 Riemannian optimization. This result is new and enhances the existing convergence results
 for the two methods. Section~\ref{sec5} concludes the paper. 

\section{Notation and preliminary results}\label{sec2} 

The quantities and notation for describing H1 to H3 will be reserved for use throughout the paper
and they are: $\{a_k\}$, $\{b_k\}$, $\{C_k\}$, $\{\tau_k\}$, $\{\varepsilon_k\}$, $\overline{B}$,
$\underline{a}$, $\tau$ and $k_1$. We also use other (global) notations. Let $m$ be the smallest positive integer such that $\sqrt{\tau}(m\!-\!k_1\!-\!1)\ge(1\!+\!\sqrt{1\!-\!\tau})(2k_1\!+\!1)\sqrt{m}$.
Obviously, such $m$ exists and $m >k_1\!+\!1$. For each $k\in\mathbb{N}$, write $\ell(k):=k+m-1$ and $\Xi_{k-1}\!:=\!\sqrt{C_{k-1}\!-\!C_{k}}$ (we will prove in Lemma~\ref{lemma1-Phi} (i) that $\{C_k\}$ is a nonincreasing sequence and hence $\Xi_k$ is well defined). For a real number $t$, the floor operator $\lfloor t \rfloor$ is the largest integer not greater than $t$ and
$t_+ := \max\{0,t\}$ (the nonnegative part of $t$). 
For a proper $h\!:\mathbb{X}\!\to\overline{\mathbb{R}}$, denote by $\partial h(\overline{x})$ the (limiting) subdifferential of $h$ at $\overline{x}\in{\rm dom}\,h$, and for any $-\infty<\!\eta_1<\!\eta_2<\!\infty$, write $[\eta_1<h<\eta_2]\!:=\{x\in\mathbb{X}\,|\,\eta_1<h(x)<\eta_2\}$. The point $\overline{x}$ at which $0\in\partial h(\overline{x})$ is called a critical point of $h$, and the set of all critical points of $h$ is denoted by ${\rm crit}\,h$. 
 Now we introduce the formal definition of KL property (with exponent $\theta\in[0,1)$).
 \begin{definition}\label{KL-def}
 For a given $\eta\in(0,\infty ]$, denote by $\Upsilon_{\!\eta}$ the family of continuous concave $\varphi\!: [0, \eta )\rightarrow\mathbb{R}_{+}$ that is continuously differentiable on $(0,\eta)$ with $\varphi'(s)>0$ for all $s\in(0,\eta)$ and $\varphi(0)=0$. A proper function $h\!:\mathbb{X}\to\overline{\mathbb{R}}$ is said to have the KL property at $\overline{x}\in{\rm dom}\,\partial h$ if there exist $\eta\in(0,\infty]$,   a neighborhood $\mathcal{U}$ of $\overline{x}$, and a function $\varphi\in\Upsilon_{\!\eta}$ such that for all $x\in\mathcal{U}\cap\big[h(\overline{x})<h<h(\overline{x})+\eta\big]$,
 \[
  \varphi'(h(x)\!-\!h(\overline{x})){\rm dist}(0,\partial h(x))\ge 1.
 \]
 If $\varphi$ can be chosen as $\varphi(t)=ct^{1-\theta}$ with $\theta\in[0,1)$
  for some $c>0$, then $h$ is said to have the KL property of exponent $\theta$
  at $\overline{x}$. If $h$ has the KL property (of exponent $\theta$) at every point
  of ${\rm dom}\,\partial h$, then it is called a KL function (of exponent $\theta$).
 \end{definition}

Next we present two technical lemmas used for the subsequent analysis. The first one, proved in \cite{QianPan23}, says that if a nonnegative and nonincreasing sequence is bounded by a mixture of two special sequences related to the indices of the sequence, then the sequence itself can be bounded by one particular sequence related to its indices.
 
\begin{lemma}\label{lemma-sequence}
 (see \cite[Lemma 2.7]{QianPan23})\ Let $\{\beta_{l}\}_{l\in\mathbb{N}}\subset\mathbb{R}_{+}$ be a nonincreasing sequence with
  $\beta_{l}\le \mu_0\max\big\{l^{\frac{1-\varsigma}{1-2\varsigma}}, (\beta_{l-m_1}-\beta_{l})^{\frac{1-\varsigma}{\varsigma}}\big\}$ for all $l\ge\max\{\overline{l},m_1\}$, where $\overline{l}$ and $m_1$ are the nonnegative integers, and $\mu_0>0$ and $\varsigma\in(\frac{1}{2},1)$ are the constants. Then, there exists $\widehat{\mu}>0$ such that for all $l\ge\overline{l}$, $\beta_{l}\le\max\big\{\mu_0,\widehat{\mu}^{\frac{1-\varsigma}{1-2\varsigma}}\big\}\lfloor\frac{l-\overline{l}}{m_1+1}\rfloor^{\frac{1-\varsigma}{1-2\varsigma}}$.
 \end{lemma}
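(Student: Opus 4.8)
The plan is to treat this as a purely elementary statement about scalar sequences, isolating the two regimes encoded in the $\max$. Throughout I abbreviate $p:=\frac{1-\varsigma}{1-2\varsigma}$ (which is negative since $\varsigma\in(\frac12,1)$) and $\rho:=\frac{\varsigma}{1-\varsigma}>1$, and I record the two algebraic identities that drive everything: $\frac{1-\varsigma}{\varsigma}=\frac1\rho$ and $(1-\rho)p=1$. For each $l$ in the valid range the hypothesis reads $\beta_l\le\mu_0\max\{l^{p},(\beta_{l-m_1}-\beta_l)^{1/\rho}\}$, so, depending on which term attains the maximum, at least one of the bounds (A) $\beta_l\le\mu_0 l^{p}$ and (B) $\beta_l\le\mu_0(\beta_{l-m_1}-\beta_l)^{1/\rho}$ holds. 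Raising (B) to the power $\rho$ turns it into the difference inequality
\[
 \beta_{l-m_1}-\beta_l\ \ge\ \mu_0^{-\rho}\,\beta_l^{\,\rho}.
\]
The idea is not to separate the indices into two subsequences, but to run a single induction on the auxiliary sequence $u_l:=\beta_l^{\,1-\rho}$, which is nondecreasing because $\beta$ is nonincreasing and $1-\rho<0$, and to show that $u_l$ grows at least linearly in $l$ in both regimes.

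In regime (A) I would raise $\beta_l\le\mu_0 l^{p}$ to the negative power $1-\rho$ and invoke $(1-\rho)p=1$ to obtain immediately $u_l\ge\mu_0^{1-\rho}\,l$. Regime (B) is the substantive case, and its ratio analysis is where I expect the only genuine obstacle to lie. Starting from the displayed inequality, I would lower-bound the increment $u_l-u_{l-m_1}$ by convexity of $t\mapsto t^{1-\rho}$ (tangent at $\beta_{l-m_1}$), giving $u_l-u_{l-m_1}\ge(\rho-1)\beta_{l-m_1}^{-\rho}(\beta_{l-m_1}-\beta_l)$. This bound degrades when $\beta_l$ is much smaller than $\beta_{l-m_1}$, so I split on the ratio: if $\beta_l\ge\frac12\beta_{l-m_1}$, then $\beta_{l-m_1}^{-\rho}\ge 2^{-\rho}\beta_l^{-\rho}$, and together with the difference inequality this yields a fixed positive increment $u_l-u_{l-m_1}\ge(\rho-1)2^{-\rho}\mu_0^{-\rho}=:c_1$; whereas if $\beta_l<\frac12\beta_{l-m_1}$, then directly $u_l=\beta_l^{1-\rho}\ge 2^{\rho-1}u_{l-m_1}$, a geometric jump, which combined with the uniform lower bound $u_{l-m_1}\ge u_{\min}>0$ (valid since $u$ is nondecreasing and the base term is finite and positive, the degenerate case $\beta\equiv0$ being trivial) again gives a fixed increment. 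Setting $\bar c:=\min\{c_1,(2^{\rho-1}-1)u_{\min}\}>0$, regime (B) delivers $u_l\ge u_{l-m_1}+\bar c$. This ratio dichotomy is the only delicate step; everything else is algebra.

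Finally I would combine the two regimes by induction to prove $u_l\ge\kappa\,(l-\overline l)$ with $\kappa:=\min\{\mu_0^{1-\rho},\bar c/m_1\}$: in regime (A) this is immediate from $u_l\ge\mu_0^{1-\rho}l$, and in regime (B) the inductive hypothesis at $l-m_1$ plus the increment $\bar c$ closes the step precisely because $\kappa m_1\le\bar c$. Inverting through $\beta_l=u_l^{p}$ and using $p<0$ gives $\beta_l\le\kappa^{p}(l-\overline l)^{p}$; since $\lfloor\frac{l-\overline l}{m_1+1}\rfloor\le l-\overline l$ and $p<0$ imply $(l-\overline l)^{p}\le\lfloor\frac{l-\overline l}{m_1+1}\rfloor^{p}$, I obtain $\beta_l\le\kappa^{p}\lfloor\frac{l-\overline l}{m_1+1}\rfloor^{p}$, and the identity $(1-\rho)p=1$ converts the constant into $\kappa^{p}=\max\{\mu_0,\widehat\mu^{\,p}\}$ with $\widehat\mu:=\bar c/m_1$, which is exactly the claimed bound. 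The remaining work is bookkeeping at the small indices $\overline l\le l<m_1$ where the recursion is not assumed: there the stated bound is vacuous or loose because the floor is small, so it is precisely the block form $\lfloor\frac{l-\overline l}{m_1+1}\rfloor^{p}$ together with the $\max$ against $\mu_0$ that absorbs these boundary terms, and I would close those few base cases by hand.
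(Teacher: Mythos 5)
Note first that the paper does not actually prove this lemma: it imports it verbatim from \cite[Lemma 2.7]{QianPan23}, so there is no in-paper proof to compare against. Judged on its own merits, your proposal follows the standard ``reciprocal-power energy'' technique (the one going back to Attouch--Bolte and used by Frankel--Garrigos--Peypouquet): pass to $u_l=\beta_l^{1-\rho}$, show each step of the recursion forces either a direct bound $u_l\ge\mu_0^{1-\rho}l$ or a uniform increment $u_l-u_{l-m_1}\ge\bar c$, and invert. Your two key computations are correct: the tangent-line (convexity) bound combined with the ratio dichotomy $\beta_l\gtrless\frac12\beta_{l-m_1}$ does yield a fixed increment in regime (B), and the identity $(1-\rho)p=1$ does convert $\kappa^p=\min\{\mu_0^{1-\rho},\widehat\mu\}^p$ into exactly the claimed constant $\max\{\mu_0,\widehat\mu^{\,p}\}$, with the floor $\lfloor\frac{l-\overline l}{m_1+1}\rfloor$ absorbed because $p<0$. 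This is a legitimate, self-contained elementary route to the statement.

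There is one loose end you should repair, and it is not quite where you placed it. You locate the ``bookkeeping'' at indices $\overline l\le l<m_1$ on the grounds that the \emph{conclusion} is vacuous there (true: the floor is zero and $0^{p}=+\infty$). But those indices also enter as \emph{inputs} to your induction: when $m_1>\overline l$ and $l\in[m_1+\overline l,\,2m_1)$ is in regime (B), the predecessor $l-m_1$ lies in $[\overline l,\max\{\overline l,m_1\})$, where the recursion hypothesis of the lemma is not assumed and your inductive claim has not been established. With your stated $\kappa=\min\{\mu_0^{1-\rho},\bar c/m_1\}$ the claim $u_{l-m_1}\ge\kappa(l-m_1-\overline l)$ can genuinely fail there (take $\beta$ very large and constant on the initial window, and $\rho$ large, so that $\bar c=c_1$ is of fixed size while $u$ is tiny on the window). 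The fix is cheap: since the window is finite and $u>0$ on it, shrink $\kappa$ further, e.g.\ $\kappa:=\min\{\mu_0^{1-\rho},\,\bar c/m_1,\,u_{\min}/m_1\}$ with $u_{\min}$ the value of $u$ at the smallest index used; the extra term is absorbed into $\widehat\mu$ and the final form of the constant is unchanged. Two trivial edge cases should also be stated: $m_1=0$ (your $\bar c/m_1$ is undefined, but then regime (B) forces $\beta_l=0$ and the bound is immediate), and $\beta$ hitting zero at some index (then it is zero ever after by monotonicity and the conclusion is trivial there, so $u_l$ is only ever formed where $\beta_l>0$). With these repairs your argument is complete.
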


 The second one provides the desirable properties of $\{C_k\}_{k\in\mathbb{N}}$ and $\{\Phi(x^k)\}_{k\in\mathbb{N}}$. 
\begin{lemma}\label{lemma1-Phi}
 Let $\{x^k\}_{k\in\mathbb{N}}$ be a sequence obeying condition H1. Then, the following claims hold.  
 \begin{itemize}
 \item[(i)] For each $k\in\mathbb{N}$, $\Phi(x^k)\le C_{k}\le C_{k-1}$, so the sequence $\{C_k\}_{k\in\mathbb{N}}$ is convergent.  

 \item[(ii)] The sequence $\{\Phi(x^k)\}_{k\in\mathbb{N}}$ is convergent and has the same limit as $\{C_k\}_{k\in\mathbb{N}}$.

 \item[(iii)] $\lim_{k\to\infty}(x^{k+1}\!-\!x^{k})=0$. 
		
 \item[(iv)] If in addition $\{x^k\}_{k\in\mathbb{N}}$  complies with conditions H2-H3, then the sequences $\{C_k\}_{k\in\mathbb{N}}$ and $\{\Phi(x^k)\}_{k\in\mathbb{N}}$ both converge to $\Phi(\overline{x})$ with 
        $0\in\partial\Phi(\overline{x})$.
\end{itemize}
\end{lemma}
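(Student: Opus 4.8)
The plan is to establish claims (i)--(iii) directly from the recursion in H1 and the structural facts $a_k\ge\underline{a}$, $\tau_k\in[\tau,1]$, and then to derive (iv) by combining these with H2--H3 and the conditions in \eqref{abk-cond}. For (i), I would first rewrite the recursion $C_k=(1-\tau_k)C_{k-1}+\tau_k\Phi(x^k)$ into the two identities $C_k-\Phi(x^k)=(1-\tau_k)(C_{k-1}-\Phi(x^k))$ and $C_k-C_{k-1}=\tau_k(\Phi(x^k)-C_{k-1})$. Since H1 yields $C_{k-1}-\Phi(x^k)\ge a_k\|x^k-x^{k-1}\|^2\ge0$ and since $\tau_k\in[\tau,1]$ gives $1-\tau_k\ge0$ and $\tau_k\ge0$, these identities immediately produce $\Phi(x^k)\le C_k$ and $C_k\le C_{k-1}$, so $\{C_k\}$ is nonincreasing. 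A one-line induction, using that each $C_k$ is a convex combination $(1-\tau_k)C_{k-1}+\tau_k\Phi(x^k)$ of quantities bounded below (as $\Phi$ is bounded below and $C_0=\Phi(x^0)$), shows $\{C_k\}$ is bounded below; being monotone it converges, say to $C^\ast$.

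For (ii), the identity $\Phi(x^k)-C_{k-1}=(C_k-C_{k-1})/\tau_k$ together with $\tau_k\ge\tau>0$ and $C_k-C_{k-1}\to0$ forces $\Phi(x^k)-C_{k-1}\to0$, whence $\Phi(x^k)\to C^\ast$, the same limit as $\{C_k\}$. For (iii), the same identity combined with the H1 estimate $a_k\|x^k-x^{k-1}\|^2\le C_{k-1}-\Phi(x^k)$ gives $\|x^k-x^{k-1}\|^2\le(C_{k-1}-C_k)/(\tau\underline{a})$; telescoping the right-hand side over $k$ bounds $\sum_k\|x^k-x^{k-1}\|^2$ by $(C_0-C^\ast)/(\tau\underline{a})<\infty$, so that $x^k-x^{k-1}\to0$, equivalently $x^{k+1}-x^k\to0$.

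The crux is (iv). Using (ii) and the lower semicontinuity of $\Phi$ along the subsequence $\{x^{k_j}\}$ of H3, I would sandwich $C^\ast$: the H3 inequality $\limsup_{j\to\infty}\Phi(x^{k_j})\le\Phi(\overline{x})$ gives $C^\ast\le\Phi(\overline{x})$, while lower semicontinuity gives $C^\ast=\liminf_{j\to\infty}\Phi(x^{k_j})\ge\Phi(\overline{x})$, so $C^\ast=\Phi(\overline{x})$ and both $\{C_k\}$ and $\{\Phi(x^k)\}$ converge to $\Phi(\overline{x})$. It then remains to verify $0\in\partial\Phi(\overline{x})$, and here lies the main obstacle: the factor $1/b_k$ in H2 may be unbounded, so one cannot conclude ${\rm dist}(0,\partial\Phi(x^k))\to0$ merely from $\|x^i-x^{i-1}\|\to0$. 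The key device is to recast the step bound from (iii) as $\|x^i-x^{i-1}\|\le\Xi_{i-1}/\sqrt{\tau a_i}$ with $\Xi_{i-1}=\sqrt{C_{i-1}-C_i}\to0$, and then to factor the window sum so that the possibly large $1/b_k$ couples only with the uniformly bounded quantity $\overline{B}$ of \eqref{abk-cond}. Concretely, $\frac{1}{b_k}\sum_{i=k-k_1}^{k+k_1}\|x^i-x^{i-1}\|\le\frac{1}{\sqrt{\tau}}\bigl(\max_{k-k_1\le i\le k+k_1}\Xi_{i-1}\bigr)\frac{1}{b_k}\sum_{i=k-k_1}^{k+k_1}\frac{1}{\sqrt{a_i}}\le\frac{\overline{B}}{\sqrt{\tau}}\max_{k-k_1\le i\le k+k_1}\Xi_{i-1}$, and since every index in the window $[k-k_1,k+k_1]$ tends to infinity as $k\to\infty$, the maximum vanishes.

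Combining this with $\varepsilon_k\to0$ (which follows from $\sum_k\varepsilon_k<\infty$), H2 yields ${\rm dist}(0,\partial\Phi(x^k))\to0$ for the whole sequence. I would then pick $g^{k_j}\in\partial\Phi(x^{k_j})$ with $\|g^{k_j}\|\to0$; since $x^{k_j}\to\overline{x}$, $\Phi(x^{k_j})\to\Phi(\overline{x})$, and $g^{k_j}\to0$, the outer semicontinuity (robustness) of the limiting subdifferential gives $0\in\partial\Phi(\overline{x})$, completing (iv).
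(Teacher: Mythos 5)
Your proposal is correct and follows essentially the same route as the paper: identities from the recursion for (i)--(iii), the lsc/H3 sandwich to identify the common limit $\Phi(\overline{x})$, the bound $\|x^i-x^{i-1}\|\le\Xi_{i-1}/\sqrt{\tau a_i}$ factored against $\overline{B}$ from \eqref{abk-cond} to kill the possibly unbounded $1/b_k$, and $\Phi$-attentive outer semicontinuity of $\partial\Phi$ to conclude $0\in\partial\Phi(\overline{x})$. The only cosmetic difference is that you bound the window by $\max_i\Xi_{i-1}$ where the paper uses $\sum_i\Xi_{i-1}$; both vanish as $k\to\infty$, so the arguments coincide.
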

\begin{proof}
{\bf(i)-(iii)} For each $k\in\mathbb{N}$, from condition H1, it immediately follows that
 \begin{align}\label{Ck-monotone1}
 C_{k}=(1-\tau_{k})C_{k-1}+\tau_{k}\Phi(x^{k})
  &\le C_{k-1}-a_{k}\tau_{k}\|x^{k}-x^{k-1}\|^2\\
  &\le C_{k-1}-\underline{a}\tau\|x^{k}-x^{k-1}\|^2,
 \label{Ck-monotone2}
 \end{align}
 which implies $C_{k}\le C_{k-1}$ and $\Phi(x^k)\le C_{k-1}$. The latter, along with $\Phi(x^{k})=(1-\tau_k)(\Phi(x^k)-C_{k-1})+C_k$, leads to $\Phi(x^k)\le C_k$. Recall that $\Phi$ is bounded below on ${\rm dom}\,\Phi$ and $\{x^k\}_{k\in\mathbb{N}}\subset{\rm dom}\,\Phi$. The recursion relation of $C_k$ in condition H1 implies that $\{C_k\}_{k\in\mathbb{N}}$ is lower bounded, so its convergence follows the nonincreasing. Note that $\Phi(x^k)-C_{k-1}=\frac{1}{\tau_k}(C_k-C_{k-1})$ and $\tau_k\in[\tau,1]$ for each $k\in\mathbb{N}$. Then, 
 the sequence $\{\Phi(x^k)\}_{k\in\mathbb{N}}$ is convergent and has the same limit as $\{C_k\}_{k\in\mathbb{N}}$. From inequality \eqref{Ck-monotone2} and the convergence of $\{C_k\}_{k\in\mathbb{N}}$, it is immediate to obtain $\lim_{k\to\infty}(x^{k+1}\!-\!x^{k})=0$.

 \noindent
 {\bf(iv)} Combining H3 with the lower semicontinuity of $\Phi$ yields that $\lim_{j\to\infty}\Phi(x^{k_j})=\Phi(\overline{x})$. Along with part (ii), we have $\lim_{k\to\infty} C_k =\lim_{k\to\infty}\Phi(x^k)=\Phi(\overline{x})$, so that  
\begin{equation} \label{Eq-Ck}
	\lim_{k\to\infty}\sum_{i= k- k_1}^{k+k_1}\Xi_{i-1}=\lim_{k\to\infty} \sum_{i= k- k_1}^{k+k_1} \sqrt{C_{i-1} - C_i} 
	= 0 .
\end{equation}
 From condition H2, for each $k\ge k_1$, there exists $w^{k}\in\partial\Phi(x^{k})$ satisfying the relations
 \begin{align}\label{Eq-wk}
  \|w^{k}\| & \le\frac{1}{b_{k}}\sum_{i=k-k_1}^{k+k_1}\|x^{i}\!-\!x^{i-1}\|
  +\varepsilon_{k} \stackrel{\eqref{Ck-monotone1}}{\le}\frac{1}{\sqrt{\tau}b_k}\sum_{i=k-k_1}^{k+k_1}\frac{1}{\sqrt{a_{i}}}\Xi_{i-1}+\varepsilon_{k}\nonumber\\
  &\le \left( \frac{1}{\sqrt{\tau}b_k}\sum_{i=k-k_1}^{k+k_1}\frac{1}{\sqrt{a_{i}}}
  \right)
  \sum_{i=k-k_1}^{k+k_1}\Xi_{i-1} 
  +\varepsilon_{k} 
  \stackrel{ \eqref{abk-cond}}{\le} \frac{\overline{B} }{ \sqrt{\tau} }
  \sum_{i=k-k_1}^{k+k_1}\Xi_{i-1}
  +\varepsilon_{k} .
  \end{align}
 Passing the limit $k\to\infty$ to the above inequality and using \eqref{Eq-Ck} lead to $\lim\limits_{k\to\infty}w^{k}\!=\!0$. Recall that $\partial\Phi$ is outer semicontinuous at $\overline{x}$ with respect to $\Phi$-attentive convergence $x\xrightarrow[\Phi]{}\overline{x}$ (i.e., $x\to\overline{x}$ and $\Phi(x)\to\Phi(\overline{x})$) by \cite[Proposition 8.7]{RW98}. Together with $\lim_{k\to\infty}\Phi(x^k)=\Phi(\overline{x})$ and $\lim_{j\to\infty}x^{k_j}=\overline{x}$, we obtain $0\in\partial\Phi(\overline{x})$. 
 \end{proof}

The inequality \eqref{Ck-monotone2} has a useful implication when there are many terms. For any given indices $k$ and $k'$ with $k'\ge k$, it holds
\begin{equation} \label{xk-bound}
 \sum_{i=k}^{k'} \| x^i - x^{i-1} \|
 \le \frac{1}{\sqrt{\underline{a}\tau }} \sum_{i=k}^{k'} \sqrt{C_{i-1} - C_i }
 = \frac{1}{\sqrt{\underline{a}\tau }} \sum_{i=k}^{k'} \Xi_{i-1}.
\end{equation} 
This inequality will be often used in the convergence analysis of the next section.

\section{Main results}\label{sec3}

In this section, we report two results for the iterate sequence satisfying H1-H3. The first one is about its full convergence and the second is about its local convergence rate. We report them in two subsections. For convenience, in the rest of this paper, let $\{x^k\}_{k\in\mathbb{N}}$ be a sequence satisfying conditions H1-H3. 

 \subsection{Full convergence}\label{sec3.1}
 
Previously, we have proved that the two consecutive  terms of $\{x^k\}_{k\in\mathbb{N}}$ gets arbitrarily close as $k$ increases. 
If we further assume that $\Phi$ has the KL property, the sequence becomes a Cauchy sequence. 
We achieve this by establishing $\sum_{i=0}^\infty \Xi_i<\infty$. The result $\sum_{i=0}^\infty \| x^{i+1}-x^i \| < \infty$ then follows \eqref{xk-bound}. 

We first establish an iterative bound for the sums of $\Xi_i$. To this end, we define
\begin{equation*}
 \mathcal{K}_1:=\big\{k\in\mathbb{N}\ |\ \Phi(x^k)\le C_{k+m}\big\}\ \ {\rm and}\ \  \mathcal{K}_2:=\big\{k\in\mathbb{N}\ |\ \Phi(x^k)>C_{k+m}\big\}.
\end{equation*}
When $\Phi$ is assumed to have the KL property at $\overline{x}$, by Definition \ref{KL-def}, there exist $\delta>0,\eta>0$ and $\varphi\in\Upsilon_{\!\eta}$ such that for all $x\in\mathbb{B}(\overline{x},\delta)\cap[\Phi(\overline{x})<\Phi<\Phi(\overline{x})+\eta]$,
\begin{equation}\label{KL-ineq0}
 \varphi'(\Phi(x)-\Phi(\overline{x})){\rm dist}(0,\partial \Phi(x))\ge 1.
\end{equation}
 Recall that $C_k\ge\Phi(\overline{x})$ for each $k\in\mathbb{N}$ by Lemma \ref{lemma1-Phi} (i) and (iv). Hence, the following
 \[
 \Gamma_{k,k+m}:=\varphi\big(\Phi(x^{k})-\Phi(\overline{x})\big)-\varphi\big(C_{k+m}-\Phi(\overline{x})\big)\ \ {\rm for}\ k\in \mathcal{K}_2
 \]
 is well defined. Now we are ready to state the bound on some partial sums of $\Xi_i$.
\begin{lemma} \label{Lemma-Bound-Xi}
 Suppose that $\Phi$ has the KL property at $\overline{x}$. Let $\delta>0,\eta>0$ and $\varphi\in\!\Upsilon_{\!\eta}$ be chosen to satisfy \eqref{KL-ineq0}. Pick any $\rho \in (0, \delta)$. Then, for each $k\in\mathbb{N}$ with $x^k\in\mathbb{B}(\overline{x},\rho)$ and $\Phi(x^k)<\Phi(\overline{x})+\eta$, the following inequality holds with $\widehat{c}:=\frac{1}{2}\big(
\overline{B}/\sqrt{\tau} +1 \big)$:
\begin{align}\label{aim-ineq}
	\frac{\sqrt{\tau}}{\sqrt{m}}\sum_{i=k}^{\ell(k)}\Xi_{i}
	&\le\left\{\begin{array}{cc}
		\sqrt{1\!-\!\tau}\,\Xi_{k-1} & {\rm if}\ k\in \mathcal{K}_1, \\
		(\frac{1}{2}\!+\!\sqrt{1\!-\!\tau})\,\sum_{i=k-k_1}^{k+k_1}\Xi_{i-1}+\varepsilon_{k}+\widehat{c}\,\Gamma_{k,k+m} & {\rm if}\ k\in \mathcal{K}_2.
	\end{array}\right.
\end{align}
\end{lemma}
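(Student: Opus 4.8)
The plan is to bound the block sum $\sum_{i=k}^{\ell(k)}\Xi_i$ by a single telescoped difference and then split the analysis according to the position of $\Phi(x^k)$ relative to $C_{k+m}$. First I would apply the Cauchy--Schwarz inequality to the $m$ terms $\Xi_i=\sqrt{C_i-C_{i+1}}$ for $i=k,\dots,\ell(k)=k+m-1$, which together with the telescoping identity $\sum_{i=k}^{k+m-1}(C_i-C_{i+1})=C_k-C_{k+m}$ yields $\sum_{i=k}^{\ell(k)}\Xi_i\le\sqrt{m}\,\sqrt{C_k-C_{k+m}}$, so that $\frac{\sqrt{\tau}}{\sqrt{m}}\sum_{i=k}^{\ell(k)}\Xi_i\le\sqrt{\tau}\sqrt{C_k-C_{k+m}}$ and the $m$ cancels. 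Everything then reduces to estimating $\sqrt{\tau}\sqrt{C_k-C_{k+m}}$. A recurring ingredient is the identity $C_k-\Phi(x^k)=\frac{1-\tau_k}{\tau_k}\Xi_{k-1}^2$, obtained directly from the recursion in H1, together with $\frac{1-\tau_k}{\tau_k}\le\frac{1-\tau}{\tau}$ (monotonicity in $\tau_k$), which gives $\sqrt{\tau}\sqrt{C_k-\Phi(x^k)}\le\sqrt{1-\tau}\,\Xi_{k-1}$.

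For $k\in\mathcal{K}_1$ the argument is short: since $\Phi(x^k)\le C_{k+m}$ we have $C_k-C_{k+m}\le C_k-\Phi(x^k)$, and combining with the previous estimate gives $\sqrt{\tau}\sqrt{C_k-C_{k+m}}\le\sqrt{1-\tau}\,\Xi_{k-1}$, the first branch of \eqref{aim-ineq}. The substantive case is $k\in\mathcal{K}_2$, where $\Phi(x^k)>C_{k+m}\ge\Phi(\overline{x})$ by Lemma~\ref{lemma1-Phi}. Here I would split $C_k-C_{k+m}=(C_k-\Phi(x^k))+(\Phi(x^k)-C_{k+m})$, both summands being nonnegative, and use subadditivity of the square root. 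The first piece is handled as above and contributes $\sqrt{1-\tau}\,\Xi_{k-1}$, dominated by $\sqrt{1-\tau}\sum_{i=k-k_1}^{k+k_1}\Xi_{i-1}$. For the second piece I would invoke the KL inequality: since $x^k\in\mathbb{B}(\overline{x},\rho)\subset\mathbb{B}(\overline{x},\delta)$ and $\Phi(\overline{x})<\Phi(x^k)<\Phi(\overline{x})+\eta$, the point $x^k$ lies in the region where \eqref{KL-ineq0} holds, so $\varphi'(\Phi(x^k)-\Phi(\overline{x}))\ge 1/{\rm dist}(0,\partial\Phi(x^k))$. Concavity of $\varphi$ then gives $\Gamma_{k,k+m}\ge\varphi'(\Phi(x^k)-\Phi(\overline{x}))(\Phi(x^k)-C_{k+m})$, whence $\Phi(x^k)-C_{k+m}\le{\rm dist}(0,\partial\Phi(x^k))\,\Gamma_{k,k+m}$. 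Feeding in the subgradient bound \eqref{Eq-wk}, namely ${\rm dist}(0,\partial\Phi(x^k))\le\frac{\overline{B}}{\sqrt{\tau}}\sum_{i=k-k_1}^{k+k_1}\Xi_{i-1}+\varepsilon_k$, reduces the task to a product estimate.

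The final and most delicate step is to convert that product bound into the prescribed affine combination with the exact constant $\widehat{c}=\frac12(\overline{B}/\sqrt{\tau}+1)$. Writing $A:=\sum_{i=k-k_1}^{k+k_1}\Xi_{i-1}$, I would multiply through by $\tau$ to get $\tau(\Phi(x^k)-C_{k+m})\le(\sqrt{\tau}\,\overline{B}\,A+\tau\varepsilon_k)\Gamma_{k,k+m}$, apply subadditivity of the square root once more, and then apply the elementary inequality $\sqrt{uv}\le\frac12(u+v)$ to each of $\sqrt{(\sqrt{\tau}\,\overline{B}\,\Gamma_{k,k+m})A}$ and $\sqrt{(\tau\Gamma_{k,k+m})\varepsilon_k}$. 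This produces $\sqrt{\tau}\sqrt{\Phi(x^k)-C_{k+m}}\le\frac12 A+\frac12\varepsilon_k+\big(\tfrac{\sqrt{\tau}\,\overline{B}}{2}+\tfrac{\tau}{2}\big)\Gamma_{k,k+m}$; the coefficient of $\Gamma_{k,k+m}$ is bounded by $\widehat{c}$ precisely because $\tau\le1$ forces $\sqrt{\tau}\le1/\sqrt{\tau}$ and $\tau\le1$, while $\frac12\varepsilon_k\le\varepsilon_k$. Adding the first-piece contribution then yields the $\mathcal{K}_2$ branch of \eqref{aim-ineq}.

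I expect this constant-balancing to be the main obstacle: one must group the factors in the AM--GM steps so that the $A$-coefficient comes out exactly $\tfrac12$ while the $\Gamma_{k,k+m}$-coefficient stays under $\widehat{c}$, whereas a careless application of Young's inequality leaves a stray $\overline{B}/\sqrt{\tau}$ factor on the wrong term and breaks the target. A minor bookkeeping point to record is that the use of \eqref{Eq-wk} requires $k\ge k_1$, so the stated range of $k$ should be read with this understanding, and one should check that $\Phi(x^k)-\Phi(\overline{x})$ and $C_{k+m}-\Phi(\overline{x})$ both lie in $[0,\eta)$ so that $\varphi$ and $\varphi'$ are defined at the relevant arguments and $\Gamma_{k,k+m}\ge0$.
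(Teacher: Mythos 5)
Your proposal is correct and follows essentially the same route as the paper's proof: the same $\mathcal{K}_1/\mathcal{K}_2$ case split, the same block bound $\sum_{i=k}^{\ell(k)}\Xi_i\le\sqrt{m}\,\sqrt{C_k-C_{k+m}}$, and, in the $\mathcal{K}_2$ case, the same chain consisting of the KL inequality, the subgradient bound \eqref{Eq-wk}, concavity of $\varphi$, and Young's inequality, arriving at the identical coefficients $\tfrac12$, $1$ and $\widehat{c}$. The only cosmetic difference is that you split $C_k-C_{k+m}$ into the two nonnegative pieces $(C_k-\Phi(x^k))+(\Phi(x^k)-C_{k+m})$ and take square roots separately (using the identity $C_k-\Phi(x^k)=\frac{1-\tau_k}{\tau_k}\Xi_{k-1}^2$), whereas the paper keeps everything under a single square root via $\tau_k(C_k-C_{k+m})\le(1-\tau_k)\Xi_{k-1}^2+(\cdot)\,\Gamma_{k,k+m}$ and then uses subadditivity; this changes nothing of substance, and your bookkeeping remarks (needing $k\ge k_1$ for \eqref{Eq-wk}, and $\Phi(x^k)-\Phi(\overline{x}),\,C_{k+m}-\Phi(\overline{x})\in[0,\eta)$) match the implicit assumptions in the paper's argument.
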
 
\begin{proof}
 We will frequently use the following bound for every $k\in\mathbb{N}$
 \begin{equation} \label{squareroot-bound}
 \frac{1}{m}\sum_{i=k}^{\ell(k)}\Xi_i
 = \frac{1}{m}{\sum_{i=k}^{\ell(k)}}\sqrt{C_{i} - C_{i+1} } 
  \le\sqrt{{\sum_{i=k}^{\ell(k)}}\frac{1}{m}(C_{i} - C_{i+1})  }
 = \frac{1}{\sqrt{m}} \sqrt{C_{k} - C_{k+m}},
\end{equation}
where the inequality is due to the concavity of the function $\mathbb{R}_{+}\ni t\mapsto\sqrt{t}$. Fix any $k\!\in\!\mathbb{N}$ with $x^k\!\in\!\mathbb{B}(\overline{x},\rho)$ and $\Phi(x^k)\!<\!\Phi(\overline{x})\!+\!\eta$. We proceed the arguments by two cases.
	
 \medskip
 \noindent
 {\bf Case 1: $k\in\mathcal{K}_1$.} In this case, $\Phi(x^k)\le C_{k+m}$. By the equality in \eqref{Ck-monotone1} and $\tau_k\in[\tau,1]$, 
 \begin{align*}
 C_k-C_{k+m}&=(1-\tau_k)C_{k-1}+\tau_k\Phi(x^k)-C_{k+m}
 \le(1-\tau_k)C_{k-1}+(\tau_k-1)C_{k+m}\\
 &= (1-\tau_k)(C_{k-1}-C_{k+m})\le (1\!-\!\tau)(C_{k-1}-C_{k}+C_{k}-C_{k+m})\\
 &=(1\!-\!\tau)(\Xi_{k-1}^2+C_{k}-C_{k+m}), 
 \end{align*}
 which implies that $\sqrt{\tau(C_k-C_{k+m})}\le\!\sqrt{1\!-\!\tau}\,\Xi_{k-1}$. Combining this inequality with the above \eqref{squareroot-bound}, we obtain the inequality \eqref{aim-ineq} for $k\in\mathcal{K}_1$. 

 \noindent
 {\bf Case 2: $k\in\mathcal{K}_2$.} Now $\Phi(\overline{x})\le C_{k+m}<\Phi(x^k)<\Phi(\overline{x})+\eta$. Using \eqref{KL-ineq0} with $x=x^k$ yields that $\varphi'(\Phi(x^k)-\Phi(\overline{x})){\rm dist}(0,\partial\Phi(x^{k}))\ge 1$, which by condition H2 implies that
 \begin{equation} \label{KL-Bound}
 \left(\frac{1}{b_{k}}\sum_{i=k-k_1}^{k+k_1}\|x^{i}\!-\!x^{i-1}\|
  +\varepsilon_{k} \right)
 \varphi'\big(\Phi(x^{k})-\Phi(\overline{x})\big)\ge 1. 
 \end{equation}
 From the definition of $\Gamma_{k,k+m}$ and the concavity of $\varphi$ on $[0,\eta)$, we have $\Gamma_{k,k+m}\ge \varphi'\big(\Phi(x^{k})\!-\!\Phi(\overline{x})\big)\big(\Phi(x^{k})\!-\!C_{k+m} \big)$. This along with \eqref{KL-Bound} leads to the inequalities
 \begin{align*}
 \Phi(x^{k})\!-\!C_{k+m}
  \le\!\Big(\frac{1}{b_{k}}\sum_{i=k-k_1}^{k+k_1}\!\!\|x^{i}\!-\!x^{i-1}\|+\varepsilon_{k}\Big)\Gamma_{k,k+m}\!\stackrel{ \eqref{Eq-wk}}{\le}\!
		\Big(\frac{\overline{B} }{ \sqrt{\tau} }\!\!
  \sum_{i=k-k_1}^{k+k_1}\!\!\Xi_{i-1}
  +\varepsilon_{k}\Big)\Gamma_{k,k+m}.
 \end{align*} 
 In addition, by the equality in \eqref{Ck-monotone1}, $\Phi(x^k)-C_{k+m}=C_k\!-\!C_{k+m}\!-\!\frac{1-\tau_k}{\tau_k}(C_{k-1}\!-\!C_{k})$. Along with the above inequality, the definition of $\overline{B}$ in \eqref{abk-cond}, and $0<\tau_k\le 1$, we have
\begin{align*}
 \tau_k(C_k\!-\!C_{k+m}) &\le (1-\!\tau_k)(C_{k-1} - C_{k})\!+
 \left(\frac{\overline{B} }{ \sqrt{\tau} }
 \sum_{i=k-k_1}^{k+k_1}\!\!\Xi_{i-1}
  +\varepsilon_{k}\right)\Gamma_{k,k+m}.
\end{align*}
 By the definition of $\widehat{c}$, we have $\overline{B}/\sqrt{\tau} = 2 \widehat{c}-1$. With this in mind, recalling that $\Xi^2_{k-1}=C_{k-1} - C_{k}$, we continue to bound the term $\sqrt{\tau_k (C_k - C_{k+m})}$:
 \begin{align*}
 \sqrt{\tau_k(C_k\!-\!C_{k+m})}&\le
 \sqrt{(1\!-\!\tau_k)\Xi_{k-1}^2+\big((2\widehat{c}\!-\!1)\textstyle{\sum_{i=k-k_1}^{k+k_1}}\!\Xi_{i-1}+\varepsilon_{k}\big)\Gamma_{k,k+m}}\\
 &\le \!\sqrt{1\!-\!\tau_k}\Xi_{k-1}\!
		+\!\sqrt{(2\widehat{c}\!-\!1)\textstyle{\sum\limits_{i=k-k_1}^{k+k_1}}\Xi_{i-1}\Gamma_{k,k+m}}\!+\!\sqrt{\varepsilon_{k}\Gamma_{k,k+m}}\\
 &=\sqrt{1\!-\!\tau_k}\Xi_{k-1}\! +
		2 \sqrt{ \Big( (\widehat{c}\!-\!1/2) \Gamma_{k,k+m} \Big) \Big(\frac{1}{2}\textstyle{\sum\limits_{i=k-k_1}^{k+k_1}}\Xi_{i-1}\Big)}\\
 &\quad + 2\sqrt{(\varepsilon_{k}/2) (\Gamma_{k,k+m}/2)} \\
 &\le\!\sqrt{1\!-\!\tau_k}\Xi_{k-1}\!+\!\frac{1}{2}\sum_{i=k-k_1}^{k+k_1}\Xi_{i-1}\!+\!\frac{1}{2}\varepsilon_{k}\!+\!\widehat{c}\,\Gamma_{k,k+m}\\
 &\le\!\Big(\frac{1}{2}\!+\!\sqrt{1\!-\!\tau}\Big)\sum_{i=k-k_1}^{k+k_1}\!\Xi_{i-1}+\varepsilon_{k}+\widehat{c}\,\Gamma_{k,k+m},
 \end{align*}
 where the first inequality is by the definition of $\widehat{c}$, and the second is due to $\sqrt{\alpha\!+\!\beta}\!\le\!\sqrt{\alpha}\!+\!\sqrt{\beta}$ for $\alpha,\beta\!\ge\! 0$. The penultimate inequality is due to the twice use of the fact	$2\sqrt{\alpha \beta}\le\alpha \!+\! \beta$ for $\alpha,\! \beta \!\ge \!\!0$. Using \eqref{squareroot-bound} again yields the inequality \eqref{aim-ineq} for $k\in\mathcal{K}_2$.
\end{proof}

{\begin{remark} \label{Remark-GLL-ZH}
To fully appreciate the innovative role that the sequence splitting in $\mathcal{K}_1$ and $\mathcal{K}_2$ plays in Lemma~\ref{Lemma-Bound-Xi} and also in future results,
we like to make a technical note to highlight its difference from the splitting scheme
used in \cite{QianPan23} for the GLL-type nonmonotone line search.
We recall the search criterion at $x^k$ of GLL-type is
\[
  \Phi(x^{k+1}) + a \| x^{k+1} - x^k \|^2 \le \max_{j =[k-m_0]_+, \ldots, k  } \Phi(x^j) ,
\]
where $m_0 \ge 0$ is a given integer and $a >0$ is a constant. 
Let $\widetilde{\ell}(k)$ be the maximum index in $\arg\max _{j =[k-m_0]_+, \ldots, k  } \Phi(x^j)$.
Apparently, we have
\[
  \Phi(x^{k+1}) \le \Phi( x^{\widetilde{\ell}(k)}) - a \| x^{k+1} - x^k \|^2 .
\]
The quality of $\Phi(x^{k+1})$ is further measured against the value of $\Phi( x^{\widetilde{\ell}(k+1)})$. Hence, we define
\[
 \widetilde{\mathcal{K}}_1 := \left\{ 
   k \in \mathbb{N} \ | \
   \Phi(x^{k+1}) \le \Phi( x^{\widetilde{\ell}(k+1)}) - \frac a2 \| x^{k+1} - x^k \|^2
 \right\} .
\]
This sequence was naturally suggested by the GLL-type search rule and was used in \cite{QianPan23}
to derive convergence properties on the sequence $\widetilde{\mathcal{K}}_1$. 
In contrast, the search in H1 uses a weighted average of functional values contained in $C_k$. 
There is no obvious way to refer to particular functional 
values $\{\Phi(x^k)\}$  as they are averaged. 
Instead, we collect in $\mathcal{K}_1$ 
all indices $k$ with $\Phi(x^k) \le C_{k+m}$. 
Note that $m \ge 2$ in our setting. This means that the value $C_{k+m}$ contains iterate information
upto $x^{k+m}$ (the future iterates $x^{k+2}, \ldots, x^{k+m}$ were involved). 
In contrast, $\widetilde{\mathcal{K}}_1$ only used the information upto $x^{k+1}$, which was already 
calculated at $x^k$. 
The future vs the present information being involved in the sequence splitting requires a completely
different set of analysis to derive the respective convergence result. 
Further difference between our results and those from \cite{QianPan23} is discussed in Remark~\ref{remark-result}.	
\end{remark}}

\begin{theorem}\label{KL-converge}
 If $\Phi$ has the KL property at $\overline{x}$, then $\sum_{k=0}^\infty\|x^{k+1}\!-\!x^k\|<\infty$, and consequently the sequence $\{x^k\}_{k\in\mathbb{N}}$ converges to $\overline{x}$ that is a critical point of $\Phi$.
\end{theorem}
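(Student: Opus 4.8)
The plan is to first reduce the statement to the summability of the step sizes. By inequality~\eqref{xk-bound}, it suffices to prove $\sum_{i=0}^{\infty}\Xi_i<\infty$; this forces $\sum_{k}\|x^{k+1}-x^k\|<\infty$, so $\{x^k\}_{k\in\mathbb{N}}$ is Cauchy and hence convergent, and the subsequence in H3 pins its limit to $\overline{x}$, which is a critical point by Lemma~\ref{lemma1-Phi}(iv). Thus the entire burden is the estimate $\sum_i\Xi_i<\infty$, to be extracted from Lemma~\ref{Lemma-Bound-Xi}.

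The engine is to sum the two-case bound~\eqref{aim-ineq} of Lemma~\ref{Lemma-Bound-Xi} over consecutive indices $k$. First I would fix $\rho\in(0,\delta)$ and observe that, since $C_k$ decreases to $\Phi(\overline{x})$ by Lemma~\ref{lemma1-Phi}, once $C_{\widehat{k}}<\Phi(\overline{x})+\eta$ we automatically have $\Phi(x^k)\le C_k\le C_{\widehat{k}}<\Phi(\overline{x})+\eta$ for every $k\ge\widehat{k}$; so the only genuinely missing hypothesis of Lemma~\ref{Lemma-Bound-Xi} along the tail is the membership $x^k\in\mathbb{B}(\overline{x},\rho)$. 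Granting this membership for $\widehat{k}\le k\le K$ (to be secured below), I would sum~\eqref{aim-ineq} over $k=\widehat{k},\dots,K$. On the left, each $\Xi_i$ with $\widehat{k}+m-1\le i\le K$ is counted exactly $m$ times, giving the lower bound $\sqrt{\tau m}\sum_{i=\widehat{k}+m-1}^{K}\Xi_i$. On the right, the windowed term $\sum_{i=k-k_1}^{k+k_1}\Xi_{i-1}$ contributes each $\Xi_\iota$ at most $2k_1+1$ times with coefficient $\tfrac12+\sqrt{1-\tau}$ (the $\mathcal{K}_1$-bound $\sqrt{1-\tau}\,\Xi_{k-1}$ is dominated by this windowed form, being its $i=k$ summand); the $\varepsilon_k$ sum to a finite tail by~\eqref{abk-cond}; and the $\Gamma_{k,k+m}$ telescope after the elementary bound $\Gamma_{k,k+m}\le\varphi(C_k-\Phi(\overline{x}))-\varphi(C_{k+m}-\Phi(\overline{x}))$, valid because $\Phi(x^k)\le C_k$ and $\varphi$ is nondecreasing, yielding a contribution at most $\widehat{c}\,m\,\varphi(C_{\widehat{k}}-\Phi(\overline{x}))$.

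Collecting the terms and isolating the common block $\sum_{i=\widehat{k}+m-1}^{K}\Xi_i$ on both sides leaves $\big(\sqrt{\tau m}-(\tfrac12+\sqrt{1-\tau})(2k_1+1)\big)\sum_{i=\widehat{k}+m-1}^{K}\Xi_i$ bounded by a constant independent of $K$ (a fixed near-$\widehat{k}$ block of $\Xi$'s, a bounded forward overhang of at most $k_1-1$ terms, the $\varepsilon$-tail, and the telescoped $\varphi$-term). The whole mechanism hinges on the leading coefficient being strictly positive, which is precisely the purpose of the global choice of $m$: its defining inequality $\sqrt{\tau}(m-k_1-1)\ge(1+\sqrt{1-\tau})(2k_1+1)\sqrt{m}$ gives $\sqrt{\tau m}>(1+\sqrt{1-\tau})(2k_1+1)>(\tfrac12+\sqrt{1-\tau})(2k_1+1)$, with slack to absorb the boundary deficits. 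Letting $K\to\infty$ then gives $\sum_i\Xi_i<\infty$.

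The main obstacle, and the step I would treat most carefully, is discharging the standing assumption $x^k\in\mathbb{B}(\overline{x},\rho)$ — the usual circularity in KL arguments, where applying the estimate presupposes the very confinement it is meant to produce. I would resolve it by a bootstrapping induction on $K$: choosing $\widehat{k}$ along the subsequence of H3 large enough that $\|x^{\widehat{k}}-\overline{x}\|$, the near-$\widehat{k}$ block of $\Xi_\iota$, the $\varepsilon$-tail $\sum_{k\ge\widehat{k}}\varepsilon_k$, and $\varphi(C_{\widehat{k}}-\Phi(\overline{x}))$ are all small. Under the inductive hypothesis $x^i\in\mathbb{B}(\overline{x},\rho)$ for $\widehat{k}\le i\le K$, the uniform bound on $\sum_i\Xi_i$ together with~\eqref{xk-bound} controls $\|x^{K+1}-\overline{x}\|\le\|x^{\widehat{k}}-\overline{x}\|+\tfrac{1}{\sqrt{\underline{a}\tau}}\sum_{i=\widehat{k}}^{K}\Xi_i$, which the smallness choices keep below $\rho$; this closes the induction, validates the summation for every $K$, and completes the proof.
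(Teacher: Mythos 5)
Your proposal is correct and follows essentially the same route as the paper: both reduce the theorem to $\sum_{i}\Xi_i<\infty$, sum the two-case bound \eqref{aim-ineq} of Lemma~\ref{Lemma-Bound-Xi} over a block of indices, exploit that each $\Xi_i$ appears $m$ times on the left but at most $2k_1+1$ times on the right, telescope the $\Gamma_{k,k+m}$-terms using $\Phi(x^k)\le C_k$, invoke the defining inequality of $m$ to obtain a strictly positive leading coefficient, and resolve the ball-confinement circularity by induction anchored at a $\widehat{k}$ chosen so that $\|x^{\widehat{k}}-\overline{x}\|$, the nearby $\Xi$-block, the $\varepsilon$-tail and the $\varphi$-terms are all small. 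The only differences are organizational: the paper carries the accumulated inequality \eqref{aim-ineq31} as part of the inductive claim and absorbs the forward overhang of the windowed sums into retained left-hand terms, whereas you re-derive the summed bound at each step from the ball membership alone and dispose of the overhang (at most $k_1-1$ terms, each bounded by $\sqrt{C_{\widehat{k}}-\Phi(\overline{x})}$) as a small constant independent of $K$ --- both treatments are sound.
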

\begin{proof}
 From $\lim_{k\to\infty}\Phi(x^k)\!=\!\Phi(\overline{x})$, there exists $\mathbb{N}\ni\widehat{k}\!>\!k_1$ such that for all $k\ge\widehat{k}$, $\Phi(x^k)<\Phi(\overline{x})\!+\!\eta$. Recall that $\lim_{k\to\infty}C_k\!=\!\Phi(\overline{x})$ and $\lim_{k\to\infty}(x^{k+1}\!-\!x^k)\!=\!0$. Along with condition H3, $\sum_{k=1}^{\infty}\varepsilon_k<\infty$, and the continuity of $\varphi$ on $[0,\eta)$, if necessary by increasing $\widehat{k}$, we have
 \begin{equation}\label{temp-ineq33}
  \|x^{\widehat{k}}-\overline{x}\|+\frac{4(2k_1\!+\!1)}{\sqrt{\underline{a}\tau}}\!\!\sum_{j=\widehat{k}-k_1}^{\ell(\widehat{k})}\!\!\Xi_{j-1}+\frac{2\widehat{c}}{\sqrt{\underline{a}\tau}}\!\sum_{j=\widehat{k}}^{\ell(\widehat{k})}\varphi\big(C_j-\Phi(\overline{x})\big)+\frac{2}{\sqrt{\underline{a}\tau}}\sum_{j=\widehat{k}}^{\infty}\varepsilon_j<\rho,
 \end{equation}
 where $\rho$ is the same as in Lemma \ref{Lemma-Bound-Xi}. Thus, by Lemma \ref{Lemma-Bound-Xi}, inequality \eqref{aim-ineq} holds for  $k=\widehat{k}$. We claim that the desired conclusion holds if for each $\nu\ge\ell(\widehat{k})$, 
\begin{equation} \label{aim-ineq31}
\left\{
 \begin{array}{ll}
  x^{\nu}\in\mathbb{B}(\overline{x},\rho),  & \\ [1ex]
  \sqrt{\tau m} \sum_{j=\ell(\widehat{k})}^{\nu}\!\Xi_{j}\!\!\!
   & \le \big(\frac{1}{2}\!\!+\!\!\sqrt{1-\tau}\big)(2k_1\!+\!1)\!\!\sum_{j=\widehat{k}\!-\!k_1}^{\nu+1}\!\!\Xi_{j-1}+\!\sum_{j=\widehat{k}}^{\nu}\varepsilon_j \\ [1ex]
   &\quad +\widehat{c}\sum_{j=\widehat{k}}^{\ell(\widehat{k})}\varphi\big(C_j\!-\!\Phi(\overline{x})\big).
 \end{array}\right.	
\end{equation} 
 Indeed, by the definition of $m$, $\sqrt{m\tau}\ge\frac{\sqrt{\tau}(m-k_1\!-\!1)}{\sqrt{m}}\ge(1\!+\!\sqrt{1\!-\!\tau})(2k_1\!+\!1)$, so that
 \[
 \widehat{c}_1:=\sqrt{m\tau}\!-\!(\frac{1}{2}\!+\!\sqrt{1\!-\!\tau})(2k_1\!+\!1)
 \ge \frac{2k_1+1 }{2} \ge \frac 12.
 \]
 According to this inequality, the inequality in \eqref{aim-ineq31} implies that
 \begin{align}\label{temp-ineq32}
 \widehat{c}_1\!\!\sum_{j=\ell(\widehat{k})}^{\nu}\!\!\Xi_{j}
  \!\le\!\frac{(1\!+\!2\sqrt{1\!-\!\tau})(2k_1\!+\!1)}{2}\!\!\!\sum_{j=\widehat{k}-k_1}^{\ell(\widehat{k})}\!\!\!\Xi_{j\!-\!1}\!+\widehat{c}\sum_{j=\widehat{k}}^{\ell(\widehat{k})}\varphi\big(C_j\!-\!\Phi(\overline{x})\big)\!+\!\sum_{j=\widehat{k}}^{\nu}\varepsilon_j.
 \end{align} 
 Passing the limit $\nu\to\infty$ to the both sides of \eqref{temp-ineq32} leads to 
 $\sum_{k=1}^\infty\Xi_{k}<\infty$, which by \eqref{xk-bound} implies that $\sum_{k=0}^\infty\|x^{k+1}\!-\!x^k\|<\infty$. By condition H3, $\{x^k\}_{k\in\mathbb{N}}$ converges to $\overline{x}$.

 Next we prove by induction that the claim \eqref{aim-ineq31} holds for every $\nu\ge\ell(\widehat{k})$. Indeed, the above \eqref{temp-ineq33} implies that $\|x^{\widehat{k}}-\overline{x}\|<\rho$. For any $\widehat{k}+1\le \nu \le \ell(\widehat{k})$, using $\|x^{\nu}-\overline{x}\|\le\|x^{\widehat{k}}-\overline{x}\|+\|x^{\nu}-x^{\widehat{k}}\|$ and the previous \eqref{xk-bound} and \eqref{temp-ineq33} leads to 
 \[
 \|x^{\nu}-\overline{x}\|\le\|x^{\widehat{k}}-\overline{x}\|
 +\sum_{j=\widehat{k}+1}^{\nu}\|x^{j}-x^{j-1}\|
 \stackrel{\eqref{xk-bound}}{\le} \|x^{\widehat{k}}-\overline{x}\|+\frac{1}{\sqrt{\underline{a}\tau}}\!\!\sum_{j=\widehat{k}+1}^{\nu}\!\Xi_{j-1}
 \stackrel{\eqref{temp-ineq33}}{<} \rho.
 \]
Thus, we have $x^\nu \in \mathbb{B}(\overline{x},\rho)$ for 
$\widehat{k}\le \nu \le \ell(\widehat{k})$. Together with $\Phi(x^k)<\Phi(\overline{x})\!+\!\eta$ for all $k\ge\widehat{k}$, invoking  Lemma~\ref{Lemma-Bound-Xi} shows that inequality \eqref{aim-ineq} holds for $x^{\widehat{k}}$, $\cdots$, $x^{\ell(\widehat{k})}$. Summing inequality \eqref{aim-ineq} from $\widehat{k}$ to $\ell(\widehat{k})$ and using the nonnegativity of $\varphi$ leads to
 \begin{align}\label{initial-bound}
  \frac{\sqrt{\tau}}{\sqrt{m}}
  \underbrace{\sum_{j=\widehat{k}}^{\ell(\widehat{k})}\sum_{i=j}^{\ell(j)}\Xi_{i}}_{:=\Delta_1}
  &\le \left(\frac{1}{2}\!+\!\sqrt{1\!-\!\tau}\right)
  \sum_{j=\widehat{k}}^{\ell(\widehat{k})}\sum_{i=j-k_1}^{j+k_1}\!\!\Xi_{i-1}
  +\sum_{j=\widehat{k}}^{\ell(\widehat{k})}\varepsilon_{j}+\widehat{c}\!\!\sum_{\mathcal{K}_2\ni j=\widehat{k}}^{\ell(\widehat{k})}\!\varphi\big(\Phi(x^j)\!-\!\Phi(\overline{x})\big)\nonumber\\
  &\le\left(\frac{1}{2}\!+\!\sqrt{1\!-\!\tau}\right)
  \underbrace{\sum_{j=\widehat{k}}^{\ell(\widehat{k})}\sum_{i=j-k_1}^{j+k_1}\!\!\Xi_{i-1}}_{:= \Delta_2}
  +\sum_{j=\widehat{k}}^{\ell(\widehat{k})}\varepsilon_{j}+\widehat{c}\sum_{j=\widehat{k}}^{\ell(\widehat{k})}\varphi\big(C_j\!-\!\Phi(\overline{x})\big),
 \end{align}
 where the second inequality is due to $\Phi(x^j)\le C_j$ by Lemma \ref{lemma1-Phi} (i) and the increasing of $\varphi$. 
Both $\Delta_1$ and $\Delta_2$ contain many terms. We simply them. 
Recall that $m > k_1+1$ by the definition of $m$ and 
$\ell(\widehat{k}) = \widehat{k} + m -1$.
For $\Delta_1$, there are at least as many as $m$ terms of $\Xi_{\ell(\widehat{k})  }$. For each $j\in\{\widehat{k},\widehat{k}+1,\ldots,\ell(\widehat{k})\}$, we  consider the terms in the sum $\sum_{i=j}^{\ell(j)}\Xi_i$ that have indices higher than $\ell(\widehat{k})$. Note that $\ell(j)\!\ge\!\ell(\widehat{k})\!+\!k_1\!+\!1$ for $j\!=\!\widehat{k}\!+\!k_1\!+\!1, \ldots, 
\widehat{k} \!+\! m \!-\!1$. We have 
\(
  \sum_{i=j}^{\ell(j)} \Xi_i\ge\sum_{i= \widehat{k}+m}^{\ell(\widehat{k}) +k_1+1 } \Xi_i,
\)
and consequently,
\[
  \Delta_1 \ge m \Xi_{\ell(\widehat{k})  } + (m-k_1\!-\!1) \sum_{i= \widehat{k}+m}^{\ell(\widehat{k}) +k_1+1 } \Xi_i .
\]
For $\Delta_2$, we consider the lowest and highest index for $i$ and they are $i=\widehat{k}-k_1$ and $i= \ell(\widehat{k}) + k_1$. Therefore,
it holds
\[
  \Delta_2 \le (2k_1+1) \sum_{i= \widehat{k}-k_1 }^{\ell(\widehat{k}) + k_1 }
  \Xi_{i-1} .
\]
Substituting those bounds back to \eqref{initial-bound} and obtaining  the following inequalities:
 \begin{align}
  &\sqrt{\tau m}\,\Xi_{\ell(\widehat{k})}+\frac{\sqrt{\tau}(m\!-\!k_1\!-\!1)}{\sqrt{m}}\sum_{j=\widehat{k}+m}^{\ell(\widehat{k})+k_1+1}\!\!\Xi_{j} \nonumber\\
  &\le
   \left(\frac{1}{2}\!+\!\sqrt{1\!-\!\tau}\right)(2k_1\!+\!1)\sum_{j=\widehat{k}-k_1}^{\ell(\widehat{k})+k_1}\!\!\Xi_{j-1}+\sum_{j=\widehat{k}}^{\ell(\widehat{k})}\varepsilon_{j}+\widehat{c}\sum_{j=\widehat{k}}^{\ell(\widehat{k})}\varphi\big(C_j\!-\!\Phi(\overline{x})\big) \nonumber \\
   & \le\left(\frac{1}{2}\!+\!\sqrt{1\!-\!\tau}\right)(2k_1\!+\!1)
   \left[~\sum_{j=\widehat{k}-k_1}^{\ell(\widehat{k})+1}\!\!\Xi_{j-1}
   + \sum_{j=\ell(\widehat{k})+2}^{\ell(\widehat{k}) +k_1+2}\!\!\Xi_{j-1}
   \right] \nonumber \\
   &~~~ +\sum_{j=\widehat{k}}^{\ell(\widehat{k})}\varepsilon_{j}+\widehat{c}\sum_{j=\widehat{k}}^{\ell(\widehat{k})}\varphi\big(C_j\!-\!\Phi(\overline{x})\big) . \label{Wanted-1}
 \end{align}
 Recall that $\ell(\widehat{k})+1 = \widehat{k}+m$ and $\frac{\sqrt{\tau}(m\!-\!k_1\!-\!1)}{\sqrt{m}}\ge(1\!+\!\sqrt{1\!-\!\tau})(2k_1\!+\!1)$. The above inequality implies that the inequality in \eqref{aim-ineq31} holds for $\nu=\ell(\widehat{k})$, so the claimed 
 \eqref{aim-ineq31} holds for $\nu=\ell(\widehat{k})$. Now assume that the claimed \eqref{aim-ineq31} holds for some $\nu\ge\ell(\widehat{k})$. We argue that it holds for $\nu+1$. From the triangle inequality and \eqref{xk-bound}, it holds
 \[
 \|x^{\nu+1}\!-\overline{x}\|\le \|x^{\widehat{k}}\!-\overline{x}\|+\sum_{j=\widehat{k}}^{\nu}\|x^{j+1}\!-x^j\|
 \le\|x^{\widehat{k}}-\overline{x}\|+\frac{1}{\sqrt{\underline{a}\tau}}\Big[\sum_{j=\widehat{k}}^{\ell(\widehat{k})-1}\Xi_{j}+\!\!\sum_{j=\ell(\widehat{k})}^{\nu}\Xi_{j}\Big]. 
 \]
 Note that \eqref{temp-ineq32} holds for this $\nu$. Together with the above inequality, it follows that  
 \begin{align*}
  \|x^{\nu+1}-\overline{x}\|
 &\le \|x^{\widehat{k}}-\overline{x}\|
  + \frac{1}{\sqrt{\underline{a}\tau}}
  \sum_{j=\widehat{k}}^{\ell(\widehat{k})-1}\!\!\Xi_j\!
  +
  \frac{(1+2\sqrt{1-\tau})(2k_1\!+\!1)}{\sqrt{\underline{a}\tau}}\!\!\sum_{j=\widehat{k}-k_1}^{\ell(\widehat{k})}\!\!\Xi_{j-1}\\
  &\qquad\!+\!\frac{2}{\sqrt{\underline{a}\tau}}\sum_{j=\widehat{k}}^\nu\varepsilon_j+\!\frac{2\widehat{c}}{\sqrt{\underline{a}\tau}}\sum_{j=\widehat{k}}^{\ell(\widehat{k})}
  \varphi\big(C_j\!-\!\Phi(\overline{x})\big)
  \quad\ \mbox{(also used $\widehat{c}_1 \ge 1/2$.)}
 \end{align*}
 Recall that $\tau\in(0,1]$, so $1\!+\!2\sqrt{1\!-\!\tau}<3$. Then, we have
 \begin{align*}
  \|x^{\nu+1}-\overline{x}\|
  &\le\|x^{\widehat{k}}-\overline{x}\|
  \!+\!\frac{3(2k_1\!+\!1)}{\sqrt{\underline{a}\tau}}\!\!\sum_{j=\widehat{k}-k_1}^{\ell(\widehat{k})}\!\!\Xi_{j-1}\!+\!\frac{2}{\sqrt{\underline{a}\tau}}\sum_{j=\widehat{k}}^\nu\varepsilon_j 
   \\
  &\qquad +\!\frac{2\widehat{c}}{\sqrt{\underline{a}\tau}}\sum_{j=\widehat{k}}^{\ell(\widehat{k})}
  \varphi\big(C_j\!-\!\Phi(\overline{x})\big) \stackrel{\eqref{temp-ineq33}}{<} \rho.
 \end{align*}
 This shows $x^{\nu+1}\in\mathbb{B}(\overline{x},\rho)$, so the rest only proves the inequality in \eqref{aim-ineq31} for $\nu+1$. Note that Lemma~\ref{Lemma-Bound-Xi} holds for $x^{\widehat{k}}$ up to $x^{\nu+1}$. Summing \eqref{aim-ineq} from $\widehat{k}$ to $\nu+1$ yields
 \begin{align*}
  \frac{\sqrt{\tau}}{\sqrt{m}}
 \sum_{j=\widehat{k}}^{\nu+1}\sum_{i=j}^{\ell(j)}\Xi_{i}
  &\le \Big(\frac{1}{2}\!+\!\sqrt{1\!-\!\tau}\Big)\sum_{j=\widehat{k}}^{\nu+1}\sum_{i=j-k_1}^{j+k_1}\!\Xi_{i-1}+\sum_{j=\widehat{k}}^{\nu+1}\varepsilon_{j}\nonumber\\
  &\quad +\widehat{c}\!\sum_{\mathcal{K}_2\ni j=\widehat{k}}^{\nu+1}\!\big[\varphi\big(\Phi(x^j)\!-\!\Phi(\overline{x})\big)-\varphi\big(C_{j+m}\!-\!\Phi(\overline{x})\big)\big].
 \end{align*} 
 As $\Phi(x^j)\le C_j$ for each $j\in\mathbb{N}$ by Lemma \ref{lemma1-Phi} (i), the increasing and nonnegativity of $\varphi$ implies that
 \begin{equation*}
 \sum_{\mathcal{K}_2\ni j=\widehat{k}}^{\nu+1}\!\big[\varphi\big(\Phi(x^j)\!-\!\Phi(\overline{x})\big)-\varphi\big(C_{j+m}\!-\!\Phi(\overline{x})\big)\big]
 \le\sum_{j=\widehat{k}}^{\ell(\widehat{k})}\varphi(C_j-\Phi(\overline{x})).
 \end{equation*}
 From the above two inequalities, it immediately follows that
 \begin{equation}\label{Delta3-4}
  \frac{\sqrt{\tau}}{\sqrt{m}}
  \underbrace{\sum_{j=\widehat{k}}^{\nu+1}\sum_{i=j}^{\ell(j)}\Xi_{i}}_{:=\Delta_3}
  \le \Big(\frac{1}{2}\!+\!\sqrt{1\!-\!\tau}\Big)\underbrace{\sum_{j=\widehat{k}}^{\nu+1}\sum_{i=j-k_1}^{j+k_1}\!\Xi_{i-1}}_{:=\Delta_4}+\sum_{j=\widehat{k}}^{\nu+1}\varepsilon_{j}+\widehat{c}\sum_{j=\widehat{k}}^{\ell(\widehat{k})}\varphi(C_j-\Phi(\overline{x})).
 \end{equation} 
For $\Delta_3$, there are at least as many as $m$ terms of $\sum_{i=\ell(\widehat{k})}^{\nu\!+\!1}\Xi_{i}$. For each $j\in\!\{\widehat{k},\ldots,\nu+1\}$, we  consider the terms in the sum $\sum_{i=j}^{\ell(j)}\Xi_i$ that have indices higher than $\nu\!+1$. Note that $\ell(j)\ge\nu\!+\!k_1\!+\!2$ for $j=\nu\!+\!k_1\!-\!m\!+\!3, \ldots, 
\nu\!+\!1$. We have 
\(
  \sum_{i=j}^{\ell(j)} \Xi_i\ge\sum_{i= \nu+2}^{\nu+k_1+2} \Xi_i,
\)
and consequently,
\[
  \Delta_3 \ge m \sum_{i=\ell(\widehat{k})}^{\nu+1}\Xi_{i} + (m-k_1-1) \sum_{i= \nu+2}^{\nu+k_1+2} \Xi_i .
\]
For $\Delta_4$, we consider the lowest and highest index for $i$ and they are $i=\widehat{k}-k_1$ and $i= \nu+k_1+1$. Therefore,
it holds
\[
  \Delta_4 \le (2k_1+1) \sum_{i= \widehat{k}-k_1}^{\nu+k_1+1 }
  \Xi_{i-1} .
\]
Substituting those bounds back to \eqref{Delta3-4} and get the following
 \begin{align*}
  &\sqrt{\tau m}\!\!\sum_{j=\ell(\widehat{k})}^{\nu+1}\!\!\Xi_{j}\!+\!\frac{\sqrt{\tau}(m\!-\!k_1\!-\!1)}{\sqrt{m}}\!\sum_{j=\nu+2}^{\nu+k_1+2}\!\!\!\Xi_{j}\\
  &\le
   \Big(\frac{1}{2}\!+\!\sqrt{1\!-\!\tau}\Big)(2k_1\!+\!1)\!\!\sum_{j=\widehat{k}-k_1}^{\nu+k_1+1}\!\!\Xi_{j-1}+\sum_{j=\widehat{k}}^{\nu+1}\varepsilon_{j}
   +\widehat{c}\sum_{j=\widehat{k}}^{\ell(\widehat{k})}\varphi(C_j-\Phi(\overline{x})).
 \end{align*}
 This, along with $\frac{\sqrt{\tau}(m\!-\!k_1\!-\!1)}{\sqrt{m}}\ge(1\!+\!\sqrt{1\!-\!\tau})(2k_1\!+\!1)$, implies that the inequality in \eqref{aim-ineq31} holds for $\nu+1$. Thus, we complete the induction proof. 
\end{proof}

\begin{remark}
 By the convergence analysis of this section, when H2 is relaxed to 
 \begin{itemize}
 \item[{\bf H2'.}] there exist $k_1\in\mathbb{N}$ and  $\overline{\delta}>0$ such that for all $k\ge k_1$ and $x^k\in\mathbb{B}(\overline{x},\overline{\delta})$, ${\rm dist}(0,\partial\Phi(x^{k}))\!\le\!\frac{1}{b_{k}}\sum_{i=k-k_1}^{k+k_1}\|x^{i}\!-\!x^{i-1}\|+\varepsilon_{k}$ with $b_{k}\!>\!0$ and $\varepsilon_{k}\!\ge\!0$, 
 \end{itemize}
 any sequence $\{x^k\}_{k\in\mathbb{N}}$ satisfying H1, H2' and H3 still has the full convergence. 
 \end{remark}
\subsection{Convergence rate}\label{sec3.2}

 Next we establish the convergence rate of $\{x^k\}_{k\in\mathbb{N}}$ under the KL property of $\Phi$ with exponent $\theta\in(0,1)$ by Theorem \ref{KL-converge} and Lemma \ref{lemma-sequence}. We will require the relative error $\varepsilon_k$ in condition H2 to be eventually controlled by a sequence related to the exponent. 
 This requirement is certainly satisfied if we simply set $\varepsilon_k =0$. For this purpose, we first establish the convergence rate of $\{C_k\}_{k\in\mathbb{N}}$.
\begin{lemma}\label{KL-fval-rate}
 Suppose that $\Phi$ has the KL property of exponent $\theta\in(0,1)$ at $\overline{x}$. If there exist $\widetilde{k}_0\in\mathbb{N},\widetilde{\gamma}>0$ and $\widetilde{\varrho}\in(0,1)$ such that for all $k\ge\widetilde{k}_0$,
 \begin{equation}\label{rate-cond0}
 \varepsilon_k\le\left\{\begin{array}{cl}
  \widetilde{\gamma}\widetilde{\varrho}^k &{\rm if}\ \theta\in(0,1/2];\\
  \!\widetilde{\gamma}k^{\frac{\theta}{1-2\theta}}&{\rm if}\ \theta\in(1/2,1),
 \end{array}\right.
 \end{equation}
 then there exist $\gamma>0$ and $\varrho\in(0,1)$ such that for sufficiently large $k$,
 \begin{equation}\label{aim-ineq-rate0}
  \widetilde{\Lambda}_k:=C_k-\Phi(\overline{x})
  \le\left\{\begin{array}{cl}
  \gamma\varrho^{k} &{\rm if}\ \theta\in(0,1/2],\\
  \gamma k^{\frac{1}{1-2\theta}}&{\rm if}\ \theta\in(1/2,1).
 \end{array}\right.
\end{equation}
\end{lemma}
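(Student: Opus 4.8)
The plan is to reduce the statement to a scalar recursion for $\widetilde{\Lambda}_k=C_k-\Phi(\overline{x})$ and then invoke Lemma~\ref{lemma-sequence} in the sublinear case or iterate a contraction in the linear case. First I would dispose of the trivial situation: by Lemma~\ref{lemma1-Phi}, $\{\widetilde{\Lambda}_k\}$ is nonnegative and nonincreasing with limit $0$, so if $\widetilde{\Lambda}_{k_0}=0$ for some $k_0$ then $\widetilde{\Lambda}_k=0$ for all $k\ge k_0$ and both estimates hold; hence I may assume $\widetilde{\Lambda}_k>0$. By Theorem~\ref{KL-converge}, $x^k\to\overline{x}$ and $\Phi(x^k)\to\Phi(\overline{x})$, so for all large $k$ we have $x^k\in\mathbb{B}(\overline{x},\delta)$ and $\Phi(\overline{x})\le\Phi(x^k)<\Phi(\overline{x})+\eta$. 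Writing $r_k:=\Phi(x^k)-\Phi(\overline{x})$, the KL property with $\varphi(t)=ct^{1-\theta}$ and \eqref{Eq-wk} give, whenever $r_k>0$,
\[
 r_k^{\theta}\le c(1-\theta)\,{\rm dist}(0,\partial\Phi(x^k))\le c(1-\theta)\Big(\tfrac{\overline{B}}{\sqrt{\tau}}\textstyle\sum_{i=k-k_1}^{k+k_1}\Xi_{i-1}+\varepsilon_k\Big).
\]

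The crux is a bridge between the averaged gap $\widetilde{\Lambda}_k$ and the pointwise gap $r_k$. Subtracting $\Phi(\overline{x})$ from $C_k=(1-\tau_k)C_{k-1}+\tau_k\Phi(x^k)$ and using $\widetilde{\Lambda}_{k-1}-\widetilde{\Lambda}_k=\Xi_{k-1}^2$ yields the exact identity $\widetilde{\Lambda}_k=r_k+\tfrac{1-\tau_k}{\tau_k}\Xi_{k-1}^2$, hence $\widetilde{\Lambda}_k\le r_k+\tfrac{1-\tau}{\tau}\Xi_{k-1}^2$. I would then apply Cauchy--Schwarz, $\sum_{i=k-k_1}^{k+k_1}\Xi_{i-1}\le\sqrt{2k_1+1}\,D_k^{1/2}$ with $D_k:=\sum_{i=k-k_1}^{k+k_1}\Xi_{i-1}^2=\widetilde{\Lambda}_{k-k_1-1}-\widetilde{\Lambda}_{k+k_1}$, combine with the displayed KL bound, and substitute into the bridge identity. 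Since $D_k\to0$ and $\Xi_{k-1}^2\le D_k$, the lower-order terms are absorbed for large $k$ (using $D_k\le D_k^{1/(2\theta)}$ when $\theta>1/2$), giving one estimate of the form $\widetilde{\Lambda}_k\le C_7 D_k^{1/(2\theta)}+C_5\varepsilon_k^{1/\theta}$ for absolute constants $C_5,C_7>0$; when $r_k=0$ this holds trivially from $\widetilde{\Lambda}_k\le\tfrac{1-\tau}{\tau}\Xi_{k-1}^2$.

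For $\theta\in(1/2,1)$ I would feed this into Lemma~\ref{lemma-sequence} with $\beta_l=\widetilde{\Lambda}_l$. The KL window $[k-k_1,k+k_1]$ and the difference $D_k$ are aligned by the shift $l=k+k_1$, $m_1=2k_1+1$, so that $D_k=\widetilde{\Lambda}_{l-m_1}-\widetilde{\Lambda}_l$ and, by monotonicity, $\widetilde{\Lambda}_l\le\widetilde{\Lambda}_k$. The decisive and most error-prone point is the choice of exponent: one takes $\varsigma=\tfrac{2\theta}{1+2\theta}\in(1/2,1)$, for which $\tfrac{1-\varsigma}{\varsigma}=\tfrac{1}{2\theta}$ matches the exponent on $D_k$, while $\tfrac{1-\varsigma}{1-2\varsigma}=\tfrac{1}{1-2\theta}$ matches both the target rate and the forcing, since $\varepsilon_k^{1/\theta}\le\widetilde{\gamma}^{1/\theta}k^{1/(1-2\theta)}$ by \eqref{rate-cond0} and $k=l-k_1$. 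Thus $\widetilde{\Lambda}_l\le\mu_0\max\{l^{(1-\varsigma)/(1-2\varsigma)},(\widetilde{\Lambda}_{l-m_1}-\widetilde{\Lambda}_l)^{(1-\varsigma)/\varsigma}\}$ for large $l$, and Lemma~\ref{lemma-sequence} delivers $\widetilde{\Lambda}_k=O(k^{1/(1-2\theta)})$.

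For $\theta\in(0,1/2]$ the same estimate applies, but now $1/(2\theta)\ge1$, so $D_k^{1/(2\theta)}\le D_k$ for large $k$. With the shift above this becomes a contraction $\widetilde{\Lambda}_l\le q\,\widetilde{\Lambda}_{l-m_1}+C_9\varepsilon_{l-k_1}^{1/\theta}$ with $q=\tfrac{C_7}{1+C_7}\in(0,1)$, and the exponential bound $\varepsilon_k\le\widetilde{\gamma}\widetilde{\varrho}^k$ makes the forcing geometric, $\varepsilon_{l-k_1}^{1/\theta}\le C_{10}\widehat{\varrho}^{\,l}$ with $\widehat{\varrho}=\widetilde{\varrho}^{1/\theta}\in(0,1)$. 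Splitting $\{l\}$ into the $m_1$ residue classes modulo $m_1$ and iterating each one-step geometric recursion then yields $\widetilde{\Lambda}_k\le\gamma\varrho^{k}$ with $\varrho=\max\{q^{1/m_1},\widehat{\varrho}\}\in(0,1)$. The main obstacle throughout is isolating the bridge identity that lets the KL inequality for $\Phi(x^k)$ control the averaged quantity $C_k$, together with the exponent bookkeeping in the sublinear case; once $\varsigma$ is chosen correctly the remaining steps are routine.
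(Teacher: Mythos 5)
Your proposal is correct and follows essentially the same route as the paper: the same KL-plus-H2 bound on $\Phi(x^k)-\Phi(\overline{x})$ via \eqref{Eq-wk}, the same algebraic identity linking $C_k$ to $\Phi(x^k)$ (your ``bridge'' $\widetilde{\Lambda}_k=r_k+\frac{1-\tau_k}{\tau_k}\Xi_{k-1}^2$ is just the paper's recursion $\widetilde{\Lambda}_k=(1-\tau_k)\widetilde{\Lambda}_{k-1}+\tau_k r_k$ rearranged), Cauchy--Schwarz/Jensen to turn $\sum_{i=k-k_1}^{k+k_1}\Xi_{i-1}$ into the telescoping difference $\widetilde{\Lambda}_{k-k_1-1}-\widetilde{\Lambda}_{k+k_1}$, a contraction with geometric forcing for $\theta\in(0,1/2]$, and Lemma~\ref{lemma-sequence} with the shift $l=k+k_1$, $m_1=2k_1+1$ and $\varsigma=\frac{2\theta}{1+2\theta}$ for $\theta\in(1/2,1)$. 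The only blemish is your unjustified assertion that $\Phi(\overline{x})\le\Phi(x^k)$ for all large $k$ (a nonmonotone sequence may dip below $\Phi(\overline{x})$ even though $C_k\ge\Phi(\overline{x})$); this is harmless, since whenever $r_k\le 0$ your bridge identity gives $\widetilde{\Lambda}_k\le\frac{1-\tau}{\tau}\Xi_{k-1}^2$ exactly as in your $r_k=0$ case, which mirrors how the paper disposes of the case $\Phi(x^k)\le\Phi(\overline{x})$.
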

\begin{proof}
 Since $\Phi$ has the KL property of exponent $\theta\in(0,1)$ at $\overline{x}$, 
 for any $x\in\mathbb{B}(\overline{x},\delta)\cap[\Phi(\overline{x})<\Phi<\Phi(\overline{x})+\eta]$, the inequality \eqref{KL-ineq0} holds with $\varphi(t)=ct^{1-\theta}$ for some $c>0$. Along with $\lim_{k\to\infty}x^k=\overline{x}$ by Theorem \ref{KL-converge}, $\lim_{k\to\infty}\Phi(x^k)=\Phi(\overline{x})$ and condition H2, there is $\widetilde{k}\ge\max\{\ell(\widehat{k}),\widetilde{k}_0\}$ such that for all $k\ge\widetilde{k}$ with $\Phi(x^k)>\Phi(\overline{x})$,  
 \begin{align}\label{ineq-Phi0}
  (\Phi(x^{k})\!-\!\Phi(\overline{x}))^{\theta}\!&\le\!
    c(1\!-\!\theta)\Big(\frac{1}{b_k}\!\!\sum_{i=k-k_1}^{k+k_1}\!\!\!\|x^{i}\!-\!x^{i-1}\|\!+\!\varepsilon_k\Big)\!
  \!\stackrel{ \eqref{Eq-wk}}{\le}\! c(1\!-\!\theta)\Big(\frac{\overline{B} }{ \sqrt{\tau} }
  \!\!\sum_{i=k-k_1}^{k+k_1}\!\Xi_{i-1}
  \!+\!\varepsilon_{k}\Big)\nonumber\\
  &\le c(1\!-\!\theta)\max\big\{1,(2\widehat{c}-1)\big\}\Big[\sum_{i=k-k_1}^{k+k_1}\Xi_{i-1}\!+\!\varepsilon_k\Big],
 \end{align}
 where the third inequality is by the definition of $\widehat{c}$ appearing in Lemma \ref{Lemma-Bound-Xi}. By letting  $\widehat{c}_2(\theta):=(c(1\!-\!\theta)\max\big\{1,(2\widehat{c}-1)\big\})^{\frac{1}{\theta}}$, for all $k\ge\widetilde{k}$ with $\Phi(x^k)>\Phi(\overline{x})$, we have
 \[
   \Phi(x^{k})\!-\!\Phi(\overline{x})\le \widehat{c}_2(\theta)\Big[\sum_{i=k-k_1}^{k+k_1}\Xi_{i-1}\!+\!\varepsilon_k\Big]^{\frac{1}{\theta}},
 \]
 which, together with the equality in \eqref{Ck-monotone1} and $\tau_k\ge\tau$, implies that 
 \begin{align}\label{temp-keyineq0}
  \widetilde{\Lambda}_k=C_{k}-\Phi(\overline{x})&=(1-\tau_k)(C_{k-1}-\Phi(\overline{x}))+\tau_k(\Phi(x^{k})-\Phi(\overline{x}))\nonumber\\
  &\le(1-\tau)\widetilde{\Lambda}_{k-1}+\tau\widehat{c}_2(\theta)\Big[\sum_{i=k-k_1}^{k+k_1}\Xi_{i-1}\!+\!\varepsilon_k\Big]^{\frac{1}{\theta}}.
 \end{align}
 While for all $k\ge\widetilde{k}$ with $\Phi(x^k)\le\Phi(\overline{x})$, from the equality in \eqref{Ck-monotone1} and $\tau_k\ge\tau$, we have
\[
  \widetilde{\Lambda}_k=(1-\tau_k)(C_{k-1}-\Phi(\overline{x}))+\tau_k(\Phi(x^{k})-\Phi(\overline{x}))\le(1-\tau)\widetilde{\Lambda}_{k-1}.
 \]
 Thus, the inequality \eqref{temp-keyineq0} holds for all $k\ge\widetilde{k}$. We proceed the proof by two cases.
 
 \noindent
  {\bf Case 1: $\theta\in(0,\frac{1}{2}]$.} Note that 
  $\lim_{k\to\infty}\sum_{i=k-k_1}^{k+k_1}\Xi_i=0$ and  $\lim_{k\to\infty}\varepsilon_k=0$. Hence, $\sum_{i=k-k_1}^{k+k_1}\Xi_i\!+\varepsilon_k\!<\!0.5$ for all $k\ge\widetilde{k}$ (if necessary by increasing $\widetilde{k}$). For any $k\ge\widetilde{k}$, using the above inequality \eqref{temp-keyineq0}, the convexity of $\mathbb{R}\ni t\mapsto t^{2}$ and  $\varepsilon_k\le\widetilde{\gamma}\widetilde{\varrho}^k$ leads to
  \begin{align*}
  \widetilde{\Lambda}_{k+k_1}\le\widetilde{\Lambda}_k&\le(1-\tau)\widetilde{\Lambda}_{k-1}+\tau\widehat{c}_2(\theta)\Big[\sum_{i=k-k_1}^{k+k_1}\!\Xi_{i-1}\!+\!\varepsilon_k\Big]^2\\
  &\le(1-\tau)\widetilde{\Lambda}_{k-1}+\tau(2k_1\!+\!2)\widehat{c}_2(\theta)\Big[\sum_{i=k-k_1}^{k+k_1}\!\Xi_{i-1}^2\!+\!\varepsilon_k^2\Big]\\
  &\le(1-\tau)\widetilde{\Lambda}_{k-k_1-1}+\tau(2k_1\!+\!2)\widehat{c}_2(\theta)\Big[\widetilde{\Lambda}_{k-k_1-1}-\widetilde{\Lambda}_{k+k_1}\!+\!\widetilde{\gamma}^2\widetilde{\varrho}^{2k}\Big]
  \end{align*}
  which, by setting  $\varrho_1\!:=\!\frac{1-\tau+\tau(2k_1+2)\widehat{c}_2(\theta)}{1+\tau(2k_1+2)\widehat{c}_2(\theta)}$, implies that 
  \begin{equation} \label{Eq-tLambda_k}
   \widetilde{\Lambda}_{k+k_1}\le\varrho_1\Big(\widetilde{\Lambda}_{k-k_1-1}+\widetilde{\gamma}^2\widetilde{\varrho}^{2k}\Big)
   = \varrho_1 \Lambda_{k-k_1-1}+\varrho_1\widetilde{\gamma}^2\widetilde{\varrho}^{2k}.
  \end{equation}
 Consequently, for any $k\ge\widetilde{k}\!+\!k_1$, it holds that 
 \begin{equation*}
 \widetilde{\Lambda}_k
  \le \varrho_1 \widetilde{\Lambda}_{k-2k_1-1} +\varrho_1 \widetilde{\gamma}^2\widetilde{\varrho}^{2k-2k_1}\le\varrho_1 \widetilde{\Lambda}_{k-2k_1-1} +\varrho_1 \widetilde{\gamma}^2\widetilde{\varrho}^{k-2k_1-1},
 \end{equation*}
 where the second inequality is due to $\widetilde{\varrho}<1$.
 Using the above recursive relation and letting $\overline{k}\!:=\!\lfloor\!\frac{k-\widetilde{k}-k_1}{2k_1+1}\!\rfloor$ and $\beta:=\varrho_1\widetilde{\gamma}^2$, we have
  \begin{equation*}
  \widetilde{\Lambda}_{k}
  \le\varrho_1^{\overline{k}}\widetilde{\Lambda}_{k-\overline{k}(2k_1+1)}  +\beta\widetilde{\varrho}^{k-2k_1-1}\Big[1+\frac{\varrho_1}{\widetilde{\varrho}^{2k_1+1}}+\cdots+\big(\frac{\varrho_1}{\widetilde{\varrho}^{2k_1+1}}\big)^{\overline{k}-1}\Big].
 \end{equation*}
 After a simple calculation for $\frac{\varrho_1}{\widetilde{\varrho}^{2k_1+1}}>1,\frac{\varrho_1}{\widetilde{\varrho}^{2k_1+1}}=1$ and $\frac{\varrho_1}{\widetilde{\varrho}^{2k_1+1}}<1$, respectively, there exist $\gamma>0$ and $\varrho\in(0,1)$ such that $\widetilde{\Lambda}_k\le\gamma\varrho^k$ for sufficiently large $k$.
	
 \noindent
 {\bf Case 2: $\theta\in(\frac{1}{2},1)$.} Since $\sum_{i=k\!-\!k_1}^{k+k_1}\!\!\Xi_i^2+\varepsilon_k^2<0.5$ for all $k\!\ge\!\widetilde{k}$ (if necessary by increasing $\widetilde{k}$), invoking inequality \eqref{temp-keyineq0} yields that for any $k\!\ge\!\widetilde{k}$,
\begin{align*}
 \widetilde{\Lambda}_{k+k_1}\le\widetilde{\Lambda}_k&\le(1-\tau)\widetilde{\Lambda}_{k-1}+\tau(2k_1+2)^{\frac{1}{2\theta}}\widehat{c}_2(\theta)\Big[\sum_{i=k-k_1}^{k+k_1}\Xi_{i-1}^2\!+\!\varepsilon_k^2\Big]^{\frac{1}{2\theta}}\\
  &\le(1-\tau)\widetilde{\Lambda}_{k-k_1-1}+\tau(2k_1+2)^{\frac{1}{2\theta}}\widehat{c}_2(\theta)\Big[\widetilde{\Lambda}_{k-k_1-1}-\widetilde{\Lambda}_{k+k_1}\!+\!\varepsilon_k^2\Big]^{\frac{1}{2\theta}}.
\end{align*}  
 By dividing the above inequality by $\tau$, after a suitable rearrangement, for any $k\!\ge\!\widetilde{k}$,
 \begin{align*}
 \widetilde{\Lambda}_{k+k_1}
  &\le\frac{(1-\tau)}{\tau}[\widetilde{\Lambda}_{k-k_1-1}-\widetilde{\Lambda}_{k+k_1}]+(2k_1+2)^{\frac{1}{2\theta}}\widehat{c}_2(\theta)\Big[\widetilde{\Lambda}_{k-k_1-1}-\widetilde{\Lambda}_{k+k_1}\!+\!\varepsilon_k^2\Big]^{\frac{1}{2\theta}}\\
&\le\!\tau^{-1}[1-\tau+\tau(2k_1+2)^{\frac{1}{2\theta}}\widehat{c}_2(\theta)]\Big((\widetilde{\Lambda}_{k-k_1-1}-\widetilde{\Lambda}_{k+k_1})
     \!+\!\widetilde{\gamma}^2k^{\frac{2\theta}{1-2\theta}}\Big)^{\frac{1}{2\theta}}.
 \end{align*}
 Note that $\frac{k}{k+k_1}\ge 0.5$ for all $k\ge\widetilde{k}$ (if necessary by increasing $\widetilde{k}$). Then, for any $k\!\ge\!\widetilde{k}$,
 \begin{align*}
 \widetilde{\Lambda}_{k+k_1}
  &\le\tau^{-1}[1\!-\!\tau\!+\tau(2k_1\!+\!2)^{\frac{1}{2\theta}}\widehat{c}_2(\theta)]\Big((\widetilde{\Lambda}_{k-k_1-1}\!-\!\widetilde{\Lambda}_{k+k_1})
     \!+\!2^{\frac{2\theta}{2\theta-1}}\widetilde{\gamma}^2(k+k_1)^{\frac{2\theta}{1-2\theta}}\Big)^{\frac{1}{2\theta}}\\
&\le M_1\max\big\{(\widetilde{\Lambda}_{k-k_1-1}-\widetilde{\Lambda}_{k+k_1})^{\frac{1}{2\theta}},
     (k+k_1)^{\frac{1}{1-2\theta}}\big\}
 \end{align*}
 with $M_1\!:=\tau^{-1}(1\!+\!2^{\frac{1}{2\theta-1}}\widetilde{\gamma}^{\frac{1}{\theta}})[1-\tau+\tau(2k_1\!+\!2)^{\frac{1}{2\theta}}\widehat{c}_2(\theta)]$. Thus, for all $k\ge\widetilde{k}\!+\!k_1$, we have
 \(
  \widetilde{\Lambda}_{k}\!\le M_1
  \!\max\!\big\{(\widetilde{\Lambda}_{k-2k_1-1}\!-\!\widetilde{\Lambda}_k)^{\frac{1}{2\theta}},k^{\frac{1}{1-2\theta}}\big\}.
  \)
 Invoking Lemma \ref{lemma-sequence} leads to the result.
 \end{proof}

 Now we are in a position to achieve the main conclusion of this section.
\begin{theorem}\label{KL-rate}
 Suppose that $\Phi$ has the KL property of exponent $\theta\in(0,1)$ at $\overline{x}$. If there exist $\widetilde{k}_0\in\mathbb{N},\widetilde{\gamma}>0$ and $\widetilde{\varrho}\in(0,1)$ such that inequality \eqref{rate-cond0} holds for all $k\ge\widetilde{k}_0$, then there exist $\gamma>0$ and $\varrho\in(0,1)$ such that for sufficiently large $k$,
 \begin{equation}\label{aim-ineq-rate}
 \|x^k-\overline{x}\|\le\Delta_k:=\sum_{j=k}^{\infty}\|x^{j+1}\!-\!x^j\|
  \le\left\{\begin{array}{cl}
  \gamma\varrho^{k} &{\rm if}\ \theta\in(0,1/2],\\
  \gamma k^{\frac{1-\theta}{1-2\theta}}&{\rm if}\ \theta\in(1/2,1).
 \end{array}\right.
\end{equation}
\end{theorem}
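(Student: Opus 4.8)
The plan is to reduce everything to bounding the tail sum $\sum_{j\ge k}\Xi_j$. The first inequality in \eqref{aim-ineq-rate} is immediate: by Theorem~\ref{KL-converge} the whole sequence converges to $\overline{x}$, so $x^k-\overline{x}=\sum_{j\ge k}(x^j-x^{j+1})$ and the triangle inequality give $\|x^k-\overline{x}\|\le\Delta_k$. By \eqref{xk-bound} we moreover have $\Delta_k\le\frac{1}{\sqrt{\underline{a}\tau}}\sum_{j\ge k}\Xi_j$, so it suffices to estimate $\sum_{j\ge k}\Xi_j$ and then combine the estimate with the functional-value rate \eqref{aim-ineq-rate0} of Lemma~\ref{KL-fval-rate}.

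First I would derive a master bound on this tail by re-running, with the fixed starting index $\widehat{k}$ replaced by an arbitrary large index $k$, the summation that produced \eqref{temp-ineq32} in the proof of Theorem~\ref{KL-converge}. Since $x^k\to\overline{x}$ and the tails $\sum_{j\ge k}\Xi_j$ and $\sum_{j\ge k}\varepsilon_j$ vanish, for all sufficiently large $k$ the ball membership $x^\nu\in\mathbb{B}(\overline{x},\rho)$ and the threshold $\Phi(x^j)<\Phi(\overline{x})+\eta$ used in that induction hold. Letting $\nu\to\infty$ in \eqref{temp-ineq32} and absorbing the $m-1$ leading terms $\Xi_k,\dots,\Xi_{\ell(k)-1}$ into the first sum, one obtains, with $\widetilde{\Lambda}_j=C_j-\Phi(\overline{x})$,
\[
 \sum_{j\ge k}\Xi_j\le A\sum_{i=k-k_1}^{\ell(k)}\Xi_{i-1}+B\sum_{j=k}^{\ell(k)}\varphi\big(\widetilde{\Lambda}_j\big)+D\sum_{j\ge k}\varepsilon_j
\]
for positive constants $A,B,D$ depending only on $m,k_1,\tau,\widehat{c}$ and $\widehat{c}_1$. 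It remains to estimate the three terms on the right using \eqref{aim-ineq-rate0} and the hypothesis \eqref{rate-cond0}.

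The decisive observation is that, with $\varphi(t)=ct^{1-\theta}$, the middle term converts the functional-value rate into exactly the target iterate rate. For the first term I would apply Cauchy--Schwarz over the $m+k_1$ indices and telescope $\Xi_{i-1}^2=\widetilde{\Lambda}_{i-1}-\widetilde{\Lambda}_i$, obtaining $\sum_{i=k-k_1}^{\ell(k)}\Xi_{i-1}\le\sqrt{m+k_1}\,\widetilde{\Lambda}_{k-k_1-1}^{1/2}$. In the case $\theta\in(0,1/2]$ all three terms are then geometric in $k$: the first behaves like $\varrho^{k/2}$, the second like $\varrho^{(1-\theta)k}$, and the third, by \eqref{rate-cond0}, like $\sum_{j\ge k}\widetilde{\varrho}^{\,j}\lesssim\widetilde{\varrho}^{\,k}$; taking $\varrho'$ to be the maximum of $\sqrt{\varrho},\varrho^{1-\theta},\widetilde{\varrho}$ gives $\Delta_k\le\gamma\varrho'^{\,k}$. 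In the case $\theta\in(1/2,1)$, using $\widetilde{\Lambda}_j\le\gamma j^{1/(1-2\theta)}$ gives $\varphi(\widetilde{\Lambda}_j)\le c\gamma^{1-\theta}j^{(1-\theta)/(1-2\theta)}$, so the middle term is $O\!\big(k^{(1-\theta)/(1-2\theta)}\big)$; since $\theta\ge1/2$ one has $\tfrac{1}{2(1-2\theta)}\le\tfrac{1-\theta}{1-2\theta}$, so the first term $\lesssim\widetilde{\Lambda}_{k-k_1-1}^{1/2}\lesssim k^{1/(2(1-2\theta))}$ decays at least as fast as the target; and for the third term \eqref{rate-cond0} gives $\sum_{j\ge k}\varepsilon_j\le\widetilde{\gamma}\sum_{j\ge k}j^{\theta/(1-2\theta)}$, whose exponent lies below $-1$ precisely because $\theta<1$, so an integral comparison yields $\sum_{j\ge k}j^{\theta/(1-2\theta)}\lesssim k^{(1-\theta)/(1-2\theta)}$. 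Summing the three bounds delivers \eqref{aim-ineq-rate}.

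The main obstacle I anticipate is the exponent bookkeeping in the sublinear case: one must verify that none of the three contributions decays slower than $k^{(1-\theta)/(1-2\theta)}$, which rests on the identity $1-p=\tfrac{1-\theta}{1-2\theta}$ for the $\varepsilon$-tail exponent $p=\theta/(2\theta-1)$ together with $\tfrac{1}{2(1-2\theta)}\le\tfrac{1-\theta}{1-2\theta}$, and in particular explains why the admissible decay \eqref{rate-cond0} of $\varepsilon_k$ must be prescribed with the specific exponent $\theta/(1-2\theta)$. A secondary but genuine point is that the master bound is a tail (infinite-sum) estimate valid from an arbitrary large index, rather than the single partial-sum estimate from a fixed $\widehat{k}$ used in Theorem~\ref{KL-converge}; checking that the induction there transfers to every sufficiently large starting index is the prerequisite for the rate substitution.
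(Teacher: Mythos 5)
Your proposal is correct and follows essentially the same route as the paper's proof: reduce to the tail sums $\sum_{j\ge k}\Xi_j$ via \eqref{xk-bound}, invoke \eqref{temp-ineq32} with the fixed $\widehat{k}$ replaced by an arbitrary sufficiently large $k$ and let $\nu\to\infty$ to get the three-term master bound, then insert the rates of Lemma~\ref{KL-fval-rate} and the $\varepsilon$-tail estimate, splitting into the geometric case $\theta\in(0,1/2]$ and the polynomial case $\theta\in(1/2,1)$ with exactly the exponent comparisons you describe. Your minor variations (Cauchy--Schwarz plus telescoping for the first term instead of term-by-term monotonicity, and direct exponent comparison $\tfrac{1}{2(1-2\theta)}\le\tfrac{1-\theta}{1-2\theta}$ instead of $t^{1/2}\le t^{1-\theta}$ for $t<1$) are cosmetic and do not change the argument.
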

\begin{proof}
 By the definition of $\Delta_k$ and the triangle inequality, it holds $\|x^k-\overline{x}\|\!\le\!\Delta_k$. For each $k\in\mathbb{N}$, write $\Lambda_k\!:=\!\sum_{j=k}^{\infty}\Xi_j$. From \eqref{xk-bound} and $\ell(k)=k+m-1$, to prove the second inequality in \eqref{aim-ineq-rate}, it suffices to argue the existence of $\gamma>0$ and $\varrho\in(0,1)$ such that for sufficiently large $k$,
\begin{equation}\label{aim-ineq-rate1}
 \Lambda_{\ell(k)}
  \le\left\{\begin{array}{cl}
  \gamma\varrho^{k} &{\rm if}\ \theta\in(0,1/2],\\
  \gamma k^{\frac{1-\theta}{1-2\theta}}&{\rm if}\ \theta\in(1/2,1).
 \end{array}\right.
\end{equation}
 Let $\widetilde{k}$ be the same as before. Invoking the previous \eqref{temp-ineq32} with $\widehat{k}$ replaced by any $k\ge\widetilde{k}$ and $\varphi(t)=ct^{1-\theta}$ for some $c>0$, and passing the limit $\nu\to\infty$ results in
 \begin{align}\label{temp-keyineq}
 \Lambda_{\ell(k)}=\sum_{j=\ell(k)}^{\infty}\!\Xi_{j}
  & \le\!\frac{(1\!+\!2\sqrt{1\!-\!\tau})(2k_1\!+\!1)}{2\widehat{c}_1}\!\! \!\sum_{j=k-k_1}^{\ell(k)} \! \!\Xi_{j-1}\!  +\!\frac{c\widehat{c}}{\widehat{c}_1}\sum_{ j=k}^{\ell(k)} \! \!\big(C_j\!-\!\Phi(\overline{x})\big)^{1-\theta}\!+\!\frac{1}{\widehat{c}_1}\!\sum_{j=k}^{\infty}\varepsilon_j\nonumber\\
  &\le\!\frac{(1\!+\!2\sqrt{1\!-\!\tau})(2k_1\!+\!1)}{2\widehat{c}_1}\sum_{j=k-k_1}^{\ell(k)}\widetilde{\Lambda}_{j-1}^{\frac{1}{2}} \!+ \!\frac{c\widehat{c}}{\widehat{c}_1}
  \sum_{j=k}^{\ell(k)} \!\widetilde{\Lambda}_j^{1 \!-\theta}\!+\!\frac{1}{\widehat{c}_1}\sum_{j=k}^{\infty}\varepsilon_j\nonumber\\
  &\le
  \!\widetilde{c}_1\widetilde{\Lambda}_{k-k_1-1}^{\frac{1}{2}} \!+ \!\widetilde{c}_2
  \widetilde{\Lambda}_k^{1 \!-\theta}\!+\!\frac{1}{\widehat{c}_1}\sum_{j=k}^{\infty}\varepsilon_j
  \end{align}
  with 
  \(  \widetilde{c}_1\!:=\!\frac{(1\!+\!2\sqrt{1-\tau})(2k_1+1)(m+k_1)}{2\widehat{c}_1} 
  \) and \( \widetilde{c}_2\!:=\!\frac{c\widehat{c}m}{\widehat{c}_1},
  \)
  where the third inequality is due to the nonincreasing of the sequence $\{C_k\}_{k\in\mathbb{N}}$. 
  We proceed the proof by two cases.
	
  \noindent
  {\bf Case 1: $\theta\in(0,\frac{1}{2}]$.} Note that 
  $\lim_{k\to\infty}\widetilde{\Lambda}_k=0$, so $\widetilde{\Lambda}_{k-k_1-1}<\!0.5$ for all $k\ge\widetilde{k}$ (if necessary by increasing $\widetilde{k}$). From \eqref{rate-cond0}, for any $k\ge\widetilde{k}$, $\sum_{j=k}^{\infty}\varepsilon_j\le\widetilde{\gamma}\sum_{j=k}^{\infty}\widetilde{\varrho}^k\le\widetilde{\gamma}\frac{\widetilde{\varrho}^k}{1-\widetilde{\varrho}}$. Thus, together with $1-\theta\ge 1/2$ and inequality \eqref{temp-keyineq}, for any $k\ge\widetilde{k}$, it holds
  \[
  \Lambda_{\ell(k)}\le [\widetilde{c}_1+\widetilde{c}_2+\widehat{c}_1^{-1}]\Big[\widetilde{\Lambda}_{k-k_1-1}^{\frac{1}{2}}+\frac{\widetilde{\gamma}\widetilde{\varrho}^k}{1\!-\!\widetilde{\varrho}}\Big].
  \]
  By Lemma \ref{KL-fval-rate}, there exist $\widehat{\gamma}>0$ and $\widehat{\varrho}\in(0,1)$ such that $\widetilde{\Lambda}_k\le\widehat{\gamma}\widehat{\varrho}^k$ for sufficiently large $k$. Then, for all $k\ge\widetilde{k}$ (if necessary by increasing $\widetilde{k}$), 
  \[  \Lambda_{\ell(k)}\le[\widetilde{c}_1+\widetilde{c}_2+\widehat{c}_1^{-1}]\Big(\widehat{\gamma}^{\frac{1}{2}}\widehat{\varrho}^{\frac{k-k_1-1}{2}}+\frac{\widetilde{\gamma}\widetilde{\varrho}^k}{1\!-\!\widetilde{\varrho}}\Big).
  \]
  Set $\widetilde{c}_3\!:=\!(\widetilde{c}_1+\widetilde{c}_2+\widehat{c}_1^{-1})(\gamma^{\frac{1}{2}}\widehat{\varrho}^{-\frac{k_1+1}{2}}+\frac{\widetilde{\gamma}}{1-\widetilde{\varrho}})$ and $\varrho_1:=\max\{\widehat{\varrho}^{\frac{1}{2}},\widetilde{\varrho}\}$. The above inequality implies $\Lambda_{\ell(k)}\le\widetilde{c}_3\varrho_1^k$ for all $k\ge\widetilde{k}$.
	
 \noindent
 {\bf Case 2: $\theta\in(\frac{1}{2},1)$.} Since $\widetilde{\Lambda}_{k-k_1-1}<0.5$ for all $k\ge\widetilde{k}$ (if necessary by increasing $\widetilde{k}$), 
 from inequality \eqref{temp-keyineq}, it follows that for any $k\ge\widetilde{k}$, 
 \begin{equation}\label{temp-Lambdak}
  \Lambda_{\ell(k)}\le [\widetilde{c}_1+\widetilde{c}_2+\widehat{c}_1^{-1}]\big[\widetilde{\Lambda}_{k-k_1-1}^{1-\theta}+\textstyle{\sum_{j=k}^{\infty}}\varepsilon_j\big]. 
  \end{equation}
  By Lemma \ref{KL-fval-rate}, there exists $\widehat{\gamma}>0$ such that $\widetilde{\Lambda}_k\le\widehat{\gamma}k^{\frac{1}{1-2\theta}}$ for sufficiently large $k$, so $\widetilde{\Lambda}_{k-k_1-1}\le\widehat{\gamma}(k-k_1-1)^{\frac{1}{1-2\theta}}$ for all $k\ge\widetilde{k}$ (if necessary by enlarging $\widetilde{k}$). Note that $\frac{k-k_1-1}{k}>0.5$ for any $k\ge\widetilde{k}$ (if necessary by enlarging $\widetilde{k}$). Then, for any $k\ge\widetilde{k}$, $\widetilde{\Lambda}_{k-k_1-1}\le 2^{\frac{1}{2\theta-1}}\widehat{\gamma}k^{\frac{1}{1-2\theta}}$. In addition, from $\varepsilon_k\le\widetilde{\gamma}k^{\frac{\theta}{1-2\theta}}$, it is not hard to get 
 \[
 \sum_{j=k}^{\infty}\varepsilon_j\le\widetilde{\gamma}\sum_{j=k}^{\infty}k^{\frac{\theta}{1-2\theta}}\le\widetilde{\gamma}\int_{k}^{\infty}t^{\frac{\theta}{1-2\theta}}dt=\frac{\widetilde{\gamma}(2\theta-1)}{1-\theta}k^{\frac{1-\theta}{1-2\theta}}.
 \]  
 Together with $\widetilde{\Lambda}_{k-k_1-1}\le 2^{\frac{1}{2\theta-1}}\widehat{\gamma}k^{\frac{1}{1-2\theta}}$ and inequality \eqref{temp-Lambdak}, for any $k\ge\widetilde{k}$, it holds
  \begin{align*}
  \Lambda_{\ell(k)}  \le[\widetilde{c}_1+\widetilde{c}_2+\widehat{c}_1^{-1}]\Big(2^{\frac{1-\theta}{2\theta-1}}\widehat{\gamma}^{1-\theta}k^{\frac{1-\theta}{1-2\theta}}+\frac{\widetilde{\gamma}(2\theta-1)}{1-\theta}k^{\frac{1-\theta}{1-2\theta}}\Big).
  \end{align*}
  Set $\widetilde{c}_4\!:=\!(\widetilde{c}_1\!+\widetilde{c}_2+\widehat{c}_1^{-1}\!)(2^{\frac{1-\theta}{2\theta-1}}\widehat{\gamma}^{1-\theta}\!+\frac{\widetilde{\gamma}(2\theta-1)}{1-\theta})$. We get $\Lambda_{\ell(k)}\le\widetilde{c}_4k^{\frac{1-\theta}{1-2\theta}}$ for all $k\ge\widetilde{k}$. 
 \end{proof}
\begin{remark}\label{remark-result}
 Theorems \ref{KL-converge} and \ref{KL-rate} extend the convergence results obtained in \cite{Attouch09,Attouch13,Frankel15} for a monotone iterative framework to a nonmonotone case. Compared with the nonmonotone iterative framework proposed in \cite{QianPan23} by 
 GLL-type nonmonotone line-search procedure, the ZH-type iterative framework has better convergence, i.e., the full convergence (local convergence rate) of its iterate sequence does not require additional restriction on $\Phi$ except its KL property (KL property of exponent $\theta\in(0,1)$). In contrast, to obtain the global convergence, the paper \cite{QianPan23} requires an assumption on the growth of an objective value subsequence, which is shown to hold automatically if $\Phi$ is also $\rho$-weakly convex on a neighborhood of the set of critical points. 
 We also note that the favourable properties of the sequence $\{C^k\}_{k\in\mathbb{N}}$ plays a crucial role in the convergence proof of the iterate sequence $\{x^k\}_{k\in\mathbb{N}}$. 
\end{remark} 

To close this section, we would like to make an important point that the proofs of this section still hold if we modify condition H1. Those variants of H1 were used in some existing research. Therefore, the obtained results may be used to derive new convergence results for some existing algorithms not covered by H1-H3. 
We briefly discuss two variants of H1 below. The first variant of H1 is the following one:
\begin{itemize}
\item [{\bf H1a.}] For each $k\in\mathbb{N}$, $\Phi(x^{k})+a_{k}\|x^{k}\!-\!x^{k-1}\|^2\le
	\Phi(x^{k-1})+\nu_{k-1}$, where $0\le\nu_{k}\le(1\!-\!\tau_{k})[\Phi(x^{k-1})+\nu_{k-1}-\Phi(x^{k})]$ with $\nu_0=0$ and $\tau_{k}\in[\tau,1]$ for some $\tau\in(0,1]$.
\end{itemize}
\vspace{0.2cm}
\noindent
By letting $C_{k-1}\!\!:=\!\Phi(x^{k-1})\!+\!\nu_{k-1}$ and using $\nu_k\le(1\!-\!\tau_{k})[\Phi(x^{k-1})\!+\!\nu_{k-1}\!-\!\Phi(x^{k})]$, it holds $C_{k}\!\le\!(1\!\!-\!\tau_{k})C_{k-1}\!\!+\!\tau_k\Phi(x^k)$. Then, using the same arguments as before can prove that any sequence satisfying conditions H1a and H2-H3 also has the convergence results of Theorems \ref{KL-converge} and \ref{KL-rate}. One can check that the iterate sequence of \cite[Algorithm 1]{Grapiglia17} satisfies conditions H1a and H2-H3 under Assumptions A1-A3 there.

The second variant of condition H1 is replaced by the following procedure:
\begin{itemize}
	\item [{\bf H1b.}] For each $k\!\in\!\mathbb{N}$, $\Phi(x^{k})\!+\!a_{k}\!\|x^k\!\!-\!x^{k\!-\!1}\|^2\!\!\le\! C_{k\!-\!1}\!\!+\!\eta_{k\!-\!1}^2$ where $a_k\!\!\ge\!\!\underline{a}$ for some $\underline{a}\!\!>\!\!0$ and
	$C_{k}=(1\!\!-\!\tau_{k})C_{k\!-\!1}+\!\tau_{k}\Phi(x^{k})$ with $C_0\!=\!\Phi(x^0)$ and $\tau_{k}\!\in\![\tau,1]$ for a $\tau\!\in\!(0,1]$.
\end{itemize}
\noindent
Sun \cite{Sun21} achieved the convergence of the iterate sequence satisfying conditions H1b with $\tau_k\equiv1$ and H2-H3, by assuming that $\Phi$ is a KL function and $\{\eta_k\}_{k\in\mathbb{N}}$ satisfies
\begin{align}\label{etak-ass}
	\{\eta_k\}_{k\in\mathbb{N}}\subset\mathbb{R}_{+}\ {\rm with}\  \sum_{k=1}^{\infty}\eta_k<\infty\ \  
	{\rm and}\ \ \sum_{k=1}^{\infty}\Big(\sum_{l=k}^{\infty}\eta_l^2\Big)^{\frac{\varsigma-1}{\varsigma}}<\infty\ {\rm for\ some}\ \varsigma>1.
\end{align}
In fact, for any sequence $\{x^k\}_{k\in\mathbb{N}}$ obeying conditions H1b and H2-H3, by introducing the potential function
$\Psi(z):=\Phi(x)+\varsigma^{-1}t^{\varsigma}$ for $z=(x,t)\in\mathbb{X}\times\mathbb{R}_{++}$ and following the similar arguments to those for Theorem \ref{KL-converge}, we can obtain the same convergence result under the assumption that $\Phi$ is a KL function and $\{\eta_k\}_{k\in\mathbb{N}}$ satisfies \eqref{etak-ass}.

 \section{Applications of the iterative framework}\label{sec4}

This section demonstrates that the novel iterative framework encompasses
some existing algorithms. We focus on two particular examples: Proximal gradient method (PGM) and Riemannian gradient method (RGM).  
We first show that the iterate sequence of a PGM with the ZH-type nonmonotone line-search for nonconvex and nonsmooth composite problems comply with conditions H1-H3.
We then apply the conclusions of Theorems \ref{KL-converge} and \ref{KL-rate} to provide its full convergence certificate under the KL property of objective functions. To the best of our knowledge, this convergence result is new.
 \subsection{Convergence of PGM with ZH-type nonmonotone line-search}\label{sec4.1}

 \ \\
 Consider the following nonconvex and nonsmooth composite optimization problem 
 \begin{equation}\label{Fprob}
 \min_{x\in\mathbb{X}}\Theta(x):=f(x)+g(x),
 \end{equation}
 where $f\!:\mathbb{X}\to\overline{\mathbb{R}}$ and  $g\!:\mathbb{X}\to\overline{\mathbb{R}}$ satisfy the following basic assumption: 
 \begin{aassumption}\label{ass0}
 \begin{description}
 \item[(i)] $g$ is a proper, lsc, and prox-bounded function with ${\rm dom}\,g\ne\emptyset$ (for the prox-boundedness, the reader may refer to \cite[Definition 1.23]{RW98});

 \item[(ii)] $f$ is differentiable on an open set $\mathcal{O}\!\supset{\rm dom}\,g$ with $\nabla\!f$ strictly continuous on $\mathcal{O}$; 

 \item[(iii)] the function $\Theta$ is lower bounded, i.e., $\inf_{x\in\mathbb{X}}\Theta(x)>-\infty$.
 \end{description}
 \end{aassumption}

 Recently, Marchi \cite{Marchi23} proposed a PGM with the ZH-type nonmonotone line-search strategy for solving \eqref{Fprob}, and its iteration steps are described as follows, where 
 \[
 \Prox_{\gamma g} (z) :=\mathop{\arg\min}_{x\in\mathbb{X}}\left\{ \frac{1}{2\gamma}\|x-z\|^2+g(x) 
 \right\}
 \]
 is the proximal mapping of the function $g$ associated with the parameter $\gamma>0$. 
 \begin{algorithm}[H]
 \caption{\label{PGMnls}{\bf\,(Nonmonotone line-search PGM)}}
 \textbf{Initialization:} Select $0<\!\gamma_{\rm min}\le\!\gamma_{\rm max}<\infty,\,\alpha,\beta\in(0,1),\,p_{\min}\in(0,1]$.
 Choose $x^0\in{\rm dom}g$. Set $C_0=\Theta(x^0)$ and $k:=0$.\\
 \textbf{while} the termination condition is not satisfied \textbf{do}
 \begin{enumerate}
  \item  Choose $\gamma_{k,0}\in[\gamma_{\rm min},\gamma_{\rm max}]$.

  \item  \textbf{For} $l=0,1,2,\ldots$ \textbf{do}

  \item \quad Let $\gamma_k=\gamma_{k,0}\beta^{l}$ and compute 
  $x^{k+1}\in \Prox_{\gamma_k g} (x^k\!-\!\gamma_k\nabla\!f(x^k))$.
  

  \item \quad If $\Theta(x^{k+1})\le C_k-\frac{\alpha}{2\gamma_k}\|x^{k+1}-x^k\|^2$, then go to Step 6.

  \item  \textbf{end for}

  \item Choose $p_k\in[p_{\min},1]$, and set 
        $C_{k+1}=(1-p_k)C_k+p_k\Theta(x^{k+1})$.

  \item  Set $k\leftarrow k+1$ and go to Step 1.
 \end{enumerate}
 \textbf{end (while)}
 \end{algorithm}

 From \cite[Lemma 4.1]{Marchi23}, Algorithm \ref{PGMnls} is well defined, and for its iterate sequence, Marchi \cite{Marchi23} only achieved the subsequence convergence. Next we prove that the iterate sequence of Algorithm \ref{PGMnls} meets conditions H1-H3.
 Consequently, its full convergence and local convergence rate
 naturally follows the convergence results of Section \ref{sec3}. 
 
 \begin{theorem}\label{theorem-PGels}
  Suppose the sequence $\{x^k\}_{k\in\mathbb{N}}$ of Algorithm \ref{PGMnls} is bounded. Then, 
  \begin{itemize}
  \item [(i)] when $\Theta$ is a KL function, $\sum_{k=0}^{\infty}\|x^k\!-\!x^{k-1}\|<\infty$ and $\{x^k\}_{k\in\mathbb{N}}$ is convergent;

  \item[(ii)] when $\Theta$ is a KL function of exponent $\theta\in[1/2,1)$, $\{x^k\}_{k\in\mathbb{N}}$ converges to a point $\widetilde{x}\in{\rm crit}\,\Theta$ and there exist $\overline{k}\in\mathbb{N},\gamma>0$ and $\varrho\in(0,1)$ such that 
              \begin{equation*}
                 \|x^k-\widetilde{x}\|\le\sum_{j=k}^{\infty}\|x^{j+1}\!-\!x^j\|
                 \le\left\{\begin{array}{cl}
                 \gamma\varrho^{k} &{\rm if}\ \theta=1/2,\\
                 \gamma k^{\frac{1-\theta}{1-2\theta}}&{\rm if}\ \theta\in(1/2,1)
                 \end{array}\right.\ {\rm for\ all}\ k\ge\overline{k}.
              \end{equation*}
  \end{itemize}
 \end{theorem}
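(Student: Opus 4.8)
The plan is to show that the iterate sequence $\{x^k\}_{k\in\mathbb{N}}$ of Algorithm \ref{PGMnls} satisfies conditions H1--H3 with $\Phi=\Theta$, and then to invoke Theorems \ref{KL-converge} and \ref{KL-rate} directly. First, since $\{x^k\}$ is bounded and $\nabla\!f$ is strictly continuous on $\mathcal{O}\supset{\rm dom}\,g$, all iterates lie in a compact set on which $\nabla\!f$ is Lipschitz with some modulus $L>0$; moreover, by \cite[Lemma 4.1]{Marchi23} the algorithm is well defined and its accepted step sizes obey $\gamma_k\in[\underline{\gamma},\gamma_{\max}]$ for some $\underline{\gamma}>0$. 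For H1 I would simply re-index Steps 4 and 6 so that $x^k$ is tested against $C_{k-1}$, which gives $\Theta(x^k)+\frac{\alpha}{2\gamma_{k-1}}\|x^k-x^{k-1}\|^2\le C_{k-1}$ together with $C_k=(1-p_{k-1})C_{k-1}+p_{k-1}\Theta(x^k)$ and $C_0=\Theta(x^0)$. Thus H1 holds with $a_k=\frac{\alpha}{2\gamma_{k-1}}\ge\frac{\alpha}{2\gamma_{\max}}=:\underline{a}$ and $\tau_k=p_{k-1}\in[p_{\min},1]$, so $\tau=p_{\min}$.

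For H2, the minimization defining $x^{k+1}\in\Prox_{\gamma_k g}(x^k-\gamma_k\nabla\!f(x^k))$ yields the first-order condition $\gamma_k^{-1}(x^k-x^{k+1})-\nabla\!f(x^k)\in\partial g(x^{k+1})$. Adding $\nabla\!f(x^{k+1})$ and using the sum rule $\partial\Theta=\nabla\!f+\partial g$ (valid for $C^1$ $f$) shows $\nabla\!f(x^{k+1})-\nabla\!f(x^k)+\gamma_k^{-1}(x^k-x^{k+1})\in\partial\Theta(x^{k+1})$, so the Lipschitz property of $\nabla\!f$ and $\gamma_k\ge\underline{\gamma}$ give ${\rm dist}(0,\partial\Theta(x^{k+1}))\le(L+\underline{\gamma}^{-1})\|x^{k+1}-x^k\|$. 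After re-indexing this is exactly H2 with $k_1=0$, $\varepsilon_k\equiv0$ and constant $b_k=(L+\underline{\gamma}^{-1})^{-1}$; since $b_k$ is a positive constant and $a_k\in[\underline{a},\frac{\alpha}{2\underline{\gamma}}]$, the three requirements in \eqref{abk-cond} are immediate.

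The main work is H3. Since $\{x^k\}$ is bounded, it has a subsequence $x^{k_j}\to\overline{x}$; by Lemma \ref{lemma1-Phi} (iii) (valid once H1 holds) $\|x^{k+1}-x^k\|\to0$, so $x^{k_j-1}\to\overline{x}$ as well. As $\Theta(x^k)\le C_k\le C_0$ by Lemma \ref{lemma1-Phi} (i), the values $\{\Theta(x^{k_j})\}$ are bounded above, so the lower semicontinuity of $g$ already forces $\overline{x}\in{\rm dom}\,\Theta$. Because $x^{k_j}$ minimizes $x\mapsto\frac{1}{2\gamma_{k_j-1}}\|x-u^{k_j-1}\|^2+g(x)$ with $u^{k_j-1}:=x^{k_j-1}-\gamma_{k_j-1}\nabla\!f(x^{k_j-1})$, comparing its value with that at $\overline{x}$ gives
\[
 g(x^{k_j})\le g(\overline{x})+\frac{1}{2\gamma_{k_j-1}}\big(\|\overline{x}-u^{k_j-1}\|^2-\|x^{k_j}-u^{k_j-1}\|^2\big).
\]
Letting $j\to\infty$ (using $\gamma_{k_j-1}\in[\underline{\gamma},\gamma_{\max}]$, boundedness of $u^{k_j-1}$, and $x^{k_j},x^{k_j-1}\to\overline{x}$) the bracket tends to $0$, so $\limsup_j g(x^{k_j})\le g(\overline{x})$; adding $f(x^{k_j})\to f(\overline{x})$ by continuity of $f$ yields $\limsup_j\Theta(x^{k_j})\le\Theta(\overline{x})$, which is H3.

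With H1--H3 verified, Lemma \ref{lemma1-Phi} (iv) gives $0\in\partial\Theta(\overline{x})$, i.e. $\overline{x}=\widetilde{x}\in{\rm crit}\,\Theta$. Part (i) is then exactly Theorem \ref{KL-converge}. For part (ii), the choice $\varepsilon_k\equiv0$ trivially satisfies the growth condition \eqref{rate-cond0} for every $\theta$, so Theorem \ref{KL-rate} applies: its branch $\theta\in(0,1/2]$ (invoked at $\theta=1/2$) gives the R-linear rate $\gamma\varrho^k$, and its branch $\theta\in(1/2,1)$ gives $\gamma k^{\frac{1-\theta}{1-2\theta}}$, so together they cover $\theta\in[1/2,1)$. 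I expect the only genuinely delicate point to be H3, as it is the one place where the proximal \emph{minimization} (rather than mere stationarity) and the boundedness of $\{x^k\}$ must be combined to upgrade the lower semicontinuity of $\Theta$ into the required $\limsup$ inequality; H1 and H2 follow routinely from the descent test and the subdifferential calculus of the prox step.
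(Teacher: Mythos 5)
Your proposal is correct and follows essentially the same route as the paper: verify H1 from the line-search test with $a_k=\frac{\alpha}{2\gamma_{k-1}}\ge\frac{\alpha}{2\gamma_{\max}}$, verify H2 ($k_1=0$, $\varepsilon_k\equiv 0$) from the prox first-order optimality condition plus a Lipschitz bound on the increments of $\nabla\!f$ along the bounded iterate sequence, verify H3 by comparing the prox objective at $x^{k_j}$ against its value at $\overline{x}$, and then invoke Theorems \ref{KL-converge} and \ref{KL-rate}. The only (cosmetic) difference is how the Lipschitz-type bound is obtained: the paper proves $\limsup\|\nabla\!f(x^{k+1})-\nabla\!f(x^k)\|/\|x^{k+1}-x^k\|\le\overline{L}$ by a contradiction argument using strict continuity of $\nabla\!f$ at accumulation points, whereas you use the standard fact that a locally Lipschitz map is Lipschitz on compact subsets of $\mathcal{O}$ applied to the closure of the iterate set --- equivalent in substance, since both arguments implicitly need that all accumulation points lie in ${\rm dom}\,g\subset\mathcal{O}$, which the lower-semicontinuity observation you make in your H3 step supplies.
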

 \begin{proof}
  From the proof of \cite[Lemma 4.1]{Marchi23}, it follows that for every $k\in\mathbb{N}$,
 \begin{equation}\label{descent-Theta}
  \Theta(x^{k+1})\le C_k-\frac{\alpha}{2\gamma_k}\|x^{k+1}-x^k\|^2.
 \end{equation}
 In addition, from the boundedness of $\{x^k\}_{k\in\mathbb{N}}$ and \cite[Corollary 4.5]{Marchi23}, the argument by contradiction shows that there exists $\underline{\gamma}>0$ such that $\gamma_k\ge\underline{\gamma}$ for all $k\in\mathbb{N}$. Thus, the sequence $\{x^k\}_{k\in\mathbb{N}}$ satisfies condition H1 with $a_k\!\equiv\!\frac{\alpha}{2\underline{\gamma}}$. From Lemma~\ref{lemma1-Phi}(iii), $\lim_{k\to\infty}(x^{k}-x^{k-1})=0$. As the sequence $\{x^k\}_{k\in\mathbb{N}}$ is bounded, there exists a convergent subsequence $\{x^{k_j}\}_{j\in\mathbb{N}}$ with the limit, say $\overline{x}$,  i.e., $\lim_{j\to\infty}x^{k_j}=\overline{x}$. Together with $\lim_{k\to\infty}(x^{k}-x^{k-1})=0$, we obtain $\lim_{k\to\infty}x^{k_j-1}=\overline{x}$. We next argue that $\limsup_{j\to\infty}\Theta(x^{k_j})\le\Theta(\overline{x})$. 
 From the definition of $x^{k+1}$, for each $j\in\mathbb{N}$, 
 \begin{align*}
 &\langle \nabla\!f(x^{k_j-1}),x^{k_j}-x^{k_j-1}\rangle+\frac{1}{2\gamma_{k_j-1}}\|x^{k_j}-x^{k_j-1}\|^2+\Theta(x^{k_j})\\
 &\le\langle \nabla\!f(x^{k_j-1}),\overline{x}-x^{k_j-1}\rangle+\frac{1}{2\gamma_{k_j-1}}\|\overline{x}-x^{k_j-1}\|^2+\Theta(\overline{x})+f(x^{k_j})-f(\overline{x}).
 \end{align*}
 Passing the limit $j\to\infty$ and using $\lim_{j\to\infty}x^{k_j}=\overline{x}=\lim_{j\to\infty}x^{k_j-1}$ and $\gamma_{k_j-1}\ge\underline{\gamma}$, we conclude that  $\limsup_{j\to\infty}\Theta(x^{k_j})\le\Theta(\overline{x})$, so condition H3 also holds.  
  
 Denote $\omega(x^0)$ by the set of accumulation points of $\{x^k\}_{k\in\mathbb{N}}$. We next prove that condition H2 holds. For each $k\in\mathbb{N}$, from the definition of $x^{k+1}$ in step 3, 
  \[
   0\in\nabla\!f(x^{k})
	+\frac{1}{\gamma_k}(x^{k+1}\!-\!x^k)+\partial g(x^{k+1}),
  \]
  which together with the expression of $\Theta$ implies that for each $k\in\mathbb{N}$,
  \begin{equation}\label{aim-ineq2}
   \partial\Theta(x^{k+1})\ni w^{k+1}\!:=\nabla\!f(x^{k+1})-\nabla f(x^k)-\frac{1}{\gamma_k}\big(x^{k+1}-x^k\big).
  \end{equation}
  We claim that there exists $\overline{L}>0$ such that for all $k\in\mathcal{K}:=\{k\in\mathbb{N}\ |\ x^{k+1}\ne x^k\}$,
  \begin{equation}\label{aim-ineq3}
   \limsup_{\mathcal{K}\in k\to\infty}\frac{\|\nabla\!f(x^{k+1})\!-\!\nabla\!f(x^{k})\|}{\|x^{k+1}\!-\!x^k\|}\le \overline{L}.
  \end{equation}
  If not, there must exist an index set $\widehat{\mathcal{K}}\subset\mathcal{K}$ such that
  \begin{equation}\label{temp-ineq}
  \lim_{\widehat{\mathcal{K}}\ni k\to\infty}\frac{\|\nabla\!f(x^{k+1})\!-\!\nabla\!f(x^{k})\|}{\|x^{k+1}\!-\!x^k\|}=\infty.
  \end{equation}
  We assume that $\lim_{\widehat{\mathcal{K}}\ni k\to\infty}x^k=\widehat{x}^*$ (if necessary by  taking a subsequence of $\{x^k\}_{k\in\widehat{\mathcal{K}}}$). Clearly, $\widehat{x}^*\in\omega(x^0)$. By Lemma~\ref{lemma1-Phi}(iii), $\lim_{\widehat{\mathcal{K}}\ni k\to\infty}\big[x^{k}+(x^{k+1}\!-\!x^{k})\big]=\widehat{x}^*$. Since $\nabla\!f$ is strictly continuous at $\widehat{x}^*$, there exists $\widehat{L}>0$ such that for all $k\in\widehat{\mathcal{K}}$ large enough,
  \[
    \|\nabla\!f(x^{k+1})-\nabla\!f(x^{k})\|
    \le\widehat{L}\|x^{k+1}-x^k\|,
   \]
  which is a contradiction to \eqref{temp-ineq}. Consequently, the claimed inequality \eqref{aim-ineq3} holds. Thus, for all $k\in\mathcal{K}$,
  $\|\nabla\!f(x^{k+1})\!-\!\nabla\!f(x^{k})\|
   \le \overline{L}\|x^{k+1}\!-\!x^k\|$.
  Together with the definition of $w^{k+1}$, it follows that $\|w^{k+1}\|\le(\frac{1}{\gamma_k}\!+\!\overline{L})\|x^{k+1}-x^{k}\|$ for all $k\in\mathcal{K}$. Recall that for each $k\in\mathbb{N}$, $w^{k+1}\!\in\partial\Theta(x^{k+1})$ and $\gamma_k\ge\underline{\gamma}$, so $\|w^{k+1}\|\le b \|x^{k+1}-x^{k}\|$ for each $k\in\mathbb{N}$ with  $b=\underline{\gamma}^{-1}+\overline{L}>0$. This shows that condition H2 holds. The desired conclusions follow Theorems \ref{KL-converge} and \ref{KL-rate} with $\widetilde{x}=\overline{x}\in(\partial\Phi)^{-1}(0)$. 
 \end{proof}

 \subsection{Convergence of RGM with ZH-type nonmonotone line-search}\label{sec4.2}

  Let $\mathcal{M}$ be an embedded submanifold of the finite-dimensional vector space $\mathbb{X}$. Consider 
  \begin{align}\label{maniprob}
  \min_{x\in\mathbb{X}}F(x):=f(x)+\delta_{\mathcal{M}}(x),
  \end{align}
  where $f\!:\mathcal{O}\supset\mathcal{M}\to\mathbb{R}$ is an $L_{\!f}$-smooth function and $\mathcal{O}$ is an open set of $\mathbb{X}$, and $\delta_{\mathcal{M}}$ denotes the indicator function of $\mathcal{M}$. Assume that the function $F$ is lower bounded, which automatically holds if $\mathcal{M}$ is compact. For this problem, Wen and Yin \cite{Wen13} and Oviedo \cite{Oviedo22} proposed the following Riemannian gradient method (RGM) with the ZH-type nonmonotone line-search strategy, where ${\rm grad}f(x^k)$ denotes the Riemannian gradient of $f$ at $x^k$, ${\rm T}_{x^k}\mathcal{M}$ is the tangent space of $\mathcal{M}$ at $x^k$, ${\rm R}_{x^k}(\cdot)$ denotes the retraction mapping at $x^k$ from the tangent bundle to $\mathcal{M}$ (see \cite[Definition 2.1]{Boumal18}).
 \begin{algorithm}[h]
 \caption{\label{ManPGMnls}{\bf\,(Nonmonotone line-search RGM)}}
 \textbf{Initialization:} Select $0<\!\alpha_m\le\alpha_M<\!\infty,\,\rho_1\in(0,1),\rho_2\!\in[0,1),\beta\in(0,1)$ and $p_{\min}\in(0,1]$.
 Choose $x^0\in\!\mathcal{M}$ and $z^0\in{\rm T}_{x^0}\mathcal{M}$ satisfying \eqref{dir-equa}. Set $C_0=F(x^0)$ and $k:=0$.\\
 \textbf{while} the termination condition is not satisfied \textbf{do}
 \begin{enumerate}
  \item  Choose $\alpha_{k,0}\in[\alpha_{m},\alpha_{M}]$.

  \item  \textbf{For} $l=0,1,2,\ldots$ \textbf{do}

  \item \quad Set $\alpha_k=\beta^l\alpha_{k,0}$.

  \item \quad If $f({\rm R}_{x^k}(\alpha_kz^k))\le C_k+\rho_1\alpha_k\langle {\rm grad}f(x^k),z^k\rangle-\rho_2\alpha_k^2\|z^k\|^2$, go to step 6.

  \item  \textbf{end for}

  \item Let $x^{k+1}={\rm R}_{x^k}(\alpha_kz^k)$ and select a direction $z^{k+1}\in{\rm T}_{x^{k+1}}\mathcal{M}$ satisfying \eqref{dir-equa}. 
  
  \item Choose $p_k\in[p_{\min},1]$, and set 
        $C_{k+1}=(1-p_k)C_k+p_kf(x^{k+1})$.

  \item  Set $k\leftarrow k+1$ and go to step 1.
 \end{enumerate}
 \textbf{end (while)}
 \end{algorithm}

 For Algorithm \ref{ManPGMnls}, as far as we know, only the gradient sequence $\{{\rm grad}f(x^k)\}_{k\in\mathbb{N}}$ is proved to converge to zero under the following condition with $c_1>0$ and $c_2>0$:
 \begin{equation}\label{dir-equa}
  \langle {\rm grad}f(x^k),z^k\rangle\le -c_1\|{\rm grad}f(x^k)\|^2\ \ {\rm and}\ \ \|z^k\|\le c_2\|{\rm grad}f(x^k)\|\quad\forall k\in\mathbb{N}.
  \end{equation} 
 Next we establish the full convergence and local convergence rate of the iterate sequence by arguing that it conforms to conditions H1-H3. Consequently, the convergence results in Section~\ref{sec3} are applicable to it. 
 \begin{theorem}\label{theorem-ManPGels}
  Suppose that the submanifold $\mathcal{M}$ is compact, and that $F$ is a KL function. Then, for the sequence $\{x^k\}_{k\in\mathbb{N}}$ generated by Algorithm \ref{ManPGMnls}, it holds that $\sum_{k=0}^{\infty}\|x^k\!-\!x^{k-1}\|<\infty$. If in addition $F$ is a KL function of exponent $\theta\in[\frac{1}{2},1)$, then it converges to some $\widetilde{x}\in{\rm crit}\,F$ and there exist $\overline{k}\in\mathbb{N},\gamma>0$ and $\varrho\in(0,1)$ such that
  \begin{equation*}
  \|x^k-\widetilde{x}\|\le\sum_{j=k}^{\infty}\|x^{j+1}\!-\!x^j\|
  \le\left\{\begin{array}{cl}
    \gamma\varrho^{k} &{\rm if}\ \theta=1/2,\\
     \gamma k^{\frac{1-\theta}{1-2\theta}}&{\rm if}\ \theta\in(1/2,1)
    \end{array}\right.\ \ {\rm for\ all}\ k\ge\overline{k}.
  \end{equation*}
 \end{theorem}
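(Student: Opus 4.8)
The plan is to mirror the proof of Theorem~\ref{theorem-PGels}: I will verify that the sequence $\{x^k\}_{k\in\mathbb{N}}$ produced by Algorithm~\ref{ManPGMnls} satisfies conditions H1--H3 (the relative-error one with $k_1=1$ and $\varepsilon_k\equiv0$), and then invoke Theorems~\ref{KL-converge} and \ref{KL-rate} with $\Phi=F$. Compactness of $\mathcal{M}$ and $L_{\!f}$-smoothness of $f$ will be used to secure two uniform estimates. First, since $\rm R$ is a smooth retraction with ${\rm R}_{x}(0)=x$ and $D{\rm R}_{x}(0)={\rm id}$, and since the admissible tangent steps $\alpha_kz^k$ stay in a compact subset of the tangent bundle (because $\|z^k\|\le c_2\|{\rm grad}f(x^k)\|$ is bounded on $\mathcal{M}$ and $\alpha_k\le\alpha_M$), there are constants $0<a_1\le a_2$ with $a_1\alpha_k\|z^k\|\le\|x^{k+1}-x^k\|\le a_2\alpha_k\|z^k\|$ for all $k$. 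Second, a descent lemma uniform over $\mathcal{M}$ together with the direction condition \eqref{dir-equa} forces the backtracked step sizes to stay bounded below, $\alpha_k\ge\underline\alpha>0$; this is the manifold analogue of the bound $\gamma_k\ge\underline\gamma$ used in Theorem~\ref{theorem-PGels}.

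Given these estimates, condition H1 follows quickly. At $x^{k+1}={\rm R}_{x^k}(\alpha_kz^k)\in\mathcal{M}$ we have $F(x^{k+1})=f(x^{k+1})$, so Step~4 of Algorithm~\ref{ManPGMnls} and \eqref{dir-equa} give $F(x^{k+1})\le C_k-\rho_1 c_1\alpha_k\|{\rm grad}f(x^k)\|^2$. Using $\|x^{k+1}-x^k\|\le a_2 c_2\alpha_k\|{\rm grad}f(x^k)\|$ and $\alpha_k\le\alpha_M$, the decrease term dominates $a_k\|x^{k+1}-x^k\|^2$ with $a_k\equiv\rho_1 c_1/(a_2^2 c_2^2\alpha_M)>0$, which is precisely H1 with $\tau_k=p_k\in[p_{\min},1]$ and $\{C_k\}$ obeying the prescribed recursion. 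Condition H3 is immediate: compactness of $\mathcal{M}$ yields a subsequence $x^{k_j}\to\overline{x}\in\mathcal{M}$, and since $F$ agrees with the continuous $f$ on $\mathcal{M}$, $F(x^{k_j})\to F(\overline{x})$, so $\limsup_{j}F(x^{k_j})\le F(\overline{x})$.

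The delicate step is H2, and it is here that the index $k_1$ in the framework earns its keep. Writing $\partial F(x)=\nabla f(x)+N_{\mathcal{M}}(x)$ (sum rule for smooth $f$ and the normal-space subdifferential of $\delta_{\mathcal{M}}$) and recalling that ${\rm grad}f(x)$ is the orthogonal projection of $\nabla f(x)$ onto ${\rm T}_{x}\mathcal{M}$, one obtains ${\rm dist}(0,\partial F(x^k))=\|{\rm grad}f(x^k)\|$. The first inequality in \eqref{dir-equa} together with Cauchy--Schwarz gives the \emph{lower} bound $\|z^k\|\ge c_1\|{\rm grad}f(x^k)\|$, and combining this with $\|x^{k+1}-x^k\|\ge a_1\alpha_k\|z^k\|$ and $\alpha_k\ge\underline\alpha$ yields
\[
 {\rm dist}(0,\partial F(x^k))=\|{\rm grad}f(x^k)\|\le\frac{1}{c_1 a_1\underline\alpha}\|x^{k+1}-x^k\|.
\]
The crucial point is that the natural bound involves the \emph{forward} error $\|x^{k+1}-x^k\|$, which cannot be accommodated by the monotone ($k_1=0$) form of H2; it is instead absorbed by taking $k_1=1$, since $\|x^{k+1}-x^k\|$ is one of the three consecutive errors in $\sum_{i=k-1}^{k+1}\|x^i-x^{i-1}\|$. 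Thus H2 holds with $k_1=1$, $b_k\equiv c_1 a_1\underline\alpha$ and $\varepsilon_k\equiv0$, and \eqref{abk-cond} is satisfied trivially because $a_k,b_k$ are positive constants and $\varepsilon_k\equiv0$.

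With H1--H3 in hand, Theorem~\ref{KL-converge} applied to the KL function $F$ gives $\sum_{k=0}^{\infty}\|x^{k+1}-x^k\|<\infty$ and convergence of $\{x^k\}$ to a critical point $\widetilde{x}$. When $F$ additionally enjoys the KL property of exponent $\theta\in[1/2,1)$, condition \eqref{rate-cond0} holds vacuously since $\varepsilon_k\equiv0$, so Theorem~\ref{KL-rate} delivers the R-linear rate for $\theta=1/2$ and the sublinear rate of exponent $\frac{1-\theta}{1-2\theta}$ for $\theta\in(1/2,1)$. I expect the main technical obstacle to be the two uniform manifold estimates---chiefly the lower retraction bound $\|x^{k+1}-x^k\|\ge a_1\alpha_k\|z^k\|$ and the uniform step-size floor $\alpha_k\ge\underline\alpha$---both of which hinge on compactness of $\mathcal{M}$ and on $D{\rm R}_x(0)={\rm id}$; the remainder is careful bookkeeping that routes the forward error through the $k_1=1$ slot of H2.
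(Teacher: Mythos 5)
Your overall route is exactly the paper's: verify H1--H3 for Algorithm~\ref{ManPGMnls} with $k_1=1$ and $\varepsilon_k\equiv0$, then invoke Theorems~\ref{KL-converge} and~\ref{KL-rate}. Your treatments of H1 (decrease from Step~4 plus an upper retraction bound $\|x^{k+1}-x^k\|\le a_2\alpha_k\|z^k\|$), of H3 (compactness of $\mathcal{M}$ plus continuity of $f$), and of H2 (using ${\rm dist}(0,\partial F(x^k))=\|{\rm grad}f(x^k)\|\le c_1^{-1}\|z^k\|$ and routing the \emph{forward} difference $\|x^{k+1}-x^k\|$ through the $k_1=1$ window) all coincide with the paper's proof.

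However, there is one genuine gap: your first ``uniform estimate,'' the lower bound $a_1\alpha_k\|z^k\|\le\|x^{k+1}-x^k\|$ asserted \emph{for all} $k$, does not follow from compactness of $\mathcal{M}$ and $D{\rm R}_x(0)={\rm id}$ alone. Retractions need not be injective or expansive at large arguments: on the unit circle with the exponential retraction, a tangent vector $v$ with $\|v\|=2\pi$ gives ${\rm R}_x(v)=x$, so $\|{\rm R}_x(v)-x\|=0$ while $\|v\|>0$; compactness of the admissible set of tangent steps does not exclude this. What compactness does give (via the estimates the paper cites from Boumal) is the upper bound and the second-order expansion $\|{\rm R}_{x}(v)-(x+v)\|=O(\|v\|^2)$, from which a lower bound like $\|x^{k+1}-x^k\|\ge\frac12\alpha_k\|z^k\|$ follows \emph{only when} $\alpha_k\|z^k\|$ is sufficiently small. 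The paper closes exactly this gap before stating H2: having H1 (which needs only the upper bound) and Lemma~\ref{lemma1-Phi}(i)--(ii), the sequences $\{C_k\}$ and $\{F(x^k)\}$ converge to a common limit, so Step~4 forces $\alpha_k\|z^k\|^2\to0$, and with $\alpha_k\ge\underline{\alpha}$ this gives $\|z^k\|\to0$; only then does the expansion yield $\|x^{k+1}-x^k\|\ge\frac12\underline{\alpha}\|z^k\|\ge\frac12\underline{\alpha}c_1\|{\rm grad}f(x^k)\|$ for all sufficiently large $k$, which is all H2 requires. Since your H1 and H3 arguments do not use the lower bound, this repair slots into your proof with no circularity; but as written, the ``for all $k$'' two-sided bound is false in general, and fixing it is precisely the step your sketch defers.
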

 \begin{proof} 
Since $\{x^k\}_{k\in\mathbb{N}}\!\subset\mathcal{M}$, the compactness of $\mathcal{M}$ implies the boundedness of $\{x^k\}_{k\in\mathbb{N}}$. Then, there exists a subsequence $\{x^{k_j}\}_{j\in\mathbb{N}}$ with $\lim_{j\to\infty}x^{k_j}=\overline{x}\in\mathcal{M}$ such that $\lim_{j\to\infty}F(x^{k_j})=F(\overline{x})$, and condition H3 holds. By combining \cite[Eq. (B.3)]{Boumal18} with $x^{k+1}={\rm R}_{x^k}(\alpha_kz^k)$, there exists a constant $\widetilde{\alpha}>0$ such that for each $k\!\in\!\mathbb{N}$, $\|x^{k+1}\!-\!x^k\|^2\!\le \!\widetilde{\alpha}^2\alpha_{M}^2\|z^k\|^2$. From step 4 of Algorithm \ref{ManPGMnls} and \eqref{dir-equa}, for each $k\!\in\!\mathbb{N}$, 
 \[   
 F(x^{k+1})\!=\!f(x^{k+1})\!\le\! C_k\!-\!\rho_1c_1\alpha_k\|{\rm grad}f(x^k)\|^2\!-\!\rho_2\alpha_k^2\|z^k\|^2\!\le\! C_k\!-\!\frac{\rho_1c_1}{c_2}\alpha_k\|z^k\|^2.
 \]
 In addition, from \cite[Lemma 2]{Oviedo22}, there exist $k_0\in\mathbb{N}$ and $\underline{\alpha}>0$ such that $\alpha_k\ge\underline{\alpha}$ for all $k\ge k_0$.
 These two facts show that condition H1 holds. 
 The remaining part is to show
 that condition H2 holds. By Lemma~\ref{lemma1-Phi}(i)-(ii), the sequences $\{C_k\}_{k\in\mathbb{N}}$ and $\{F(x^k)\}_{k\in\mathbb{N}}$ converge to the same limit. Together with step 4 of Algorithm \ref{ManPGMnls}, we have $\lim_{k\to\infty}\alpha_k\|z^k\|^2=0$, which along with $\alpha_k\ge\underline{\alpha}$ for all $k\in\mathbb{N}$ implies that $\lim_{k\to\infty}\|z^k\|=0$. From $\alpha_k\le\alpha_{\rm max}$ for all $k$ and \cite[Eq. (B.4)]{Boumal18}, we have $\|{\rm R}_{x^k}(\alpha_kz^k)\!-\!(x^k\!+\!\alpha_k z^k)\|\!=\!o(\|\alpha_kz^k\|)$. If necessary by increasing $k_0$, for each $k\ge k_0$, 
 \begin{equation}\label{gradf-equa}
  \|x^{k+1}\!-\!x^k\|\ge \alpha_k\|z^k\|\!-\!\|{\rm R}_{x^k}(\alpha_kz^k)\!-\!(x^k\!+\!\alpha_k z^k)\|\!\ge\!
  \frac{1}{2}\underline{\alpha}\|z^k\|\ge\frac{1}{2}\underline{\alpha}c_1\|{\rm grad}f(x^k)\|, 
  \end{equation}
  where the third inequality is obtained by using the first inequality in \eqref{dir-equa}.
  Note that $\partial F(x^k)=\nabla f(x^k)+{\rm N}_{x^k}\mathcal{M}$ for each $k\in\mathbb{N}$, where ${\rm N}_{x^k}\mathcal{M}$ denotes the normal space of $\mathcal{M}$ at $x^k$. Then,
  we have for each $k\in\mathbb{N}$,
 \[  
 {\rm dist}(0,\partial F(x^k))=\|{\rm Proj}_{{\rm T}_{\!x^k}\mathcal{M}}(\nabla\!f(x^k))\|=\|{\rm grad}\,f(x^k)\|\le 2/(\underline{\alpha}c_1)\|x^{k+1}-x^k\|,
 \]
 where the inequality is due to \eqref{gradf-equa}. This shows that  condition H2 holds with $k_1=1$ and $\varepsilon_k=0$ for all $k\ge k_0$. Now the desired conclusions follow Theorems \ref{KL-converge} and \ref{KL-rate}. 
 \end{proof}

 From \cite[Lemma 2.10]{QianPanXiao24}, the KL property of $F$ at $x\in\mathcal{M}$ in terms of Definition \ref{KL-def} is equivalent to the KL property of $f|_{\mathcal{M}}$, the restriction of $f$ on $\mathcal{M}$, at $x\in\mathcal{M}$ in terms of \cite[Definition 3.5]{Cruz2013}. 

 \section{Conclusion}\label{sec5}

In this paper, we proposed a novel nonmonotone descent iterative framework consisting of the ZH-type nonmonotone decrease condition and a relative error condition.
We proved that any iterative sequence complying with this framework enjoys full convergence when $\Phi$ is a KL function, and the convergence is linear if $\Phi$ is a KL function of exponent $1/2$. 
This answers the question whether a descent method with ZH-type nonmonotone 
line search strategy converges.
We also demonstrated the cases of existing algorithms that fall into
the proposed framework. As a result, new convergence results are readily
available for those algorithms as a consequence of our obtained results.
Furthermore, the proofs of the main results may be adapted to modified frameworks that include more existing algorithms as special instances. 
This shows the potential of the proposed framework to obtain new 
convergence results for existing algorithms.
Compared with the nonmonotone iterative framework \cite{QianPan23}, the new one possesses the full convergence (local convergence rate) without additional restriction on $\Phi$ except its KL property (KL property of exponent $\theta\in(0,1)$). 
We hope to explore more applications of the proposed framework in our next research project.
  

\section*{Appendix: ZH-type nonmonotone line search scheme satisfies (H1)-(H3)}\label{secA}

We first recall the nonmonotone line-search algorithm (NLSA) proposed in \cite{ZhangH04}.
Let $\Phi=f$, which is a continuously differentiable function and is bounded from below on $\mathbb{X}=\mathbb{R}^n$. 
We follow the notation used in \cite{ZhangH04}. In particular,
NLSA uses  
$x_k$ with subscript $k$ for its iterates. 
Furthermore, it uses
$\bfg_k := \nabla f(x_k)$ (gradient of $f$ at $x_k$) and $\bfd_k$ for the search direction  at $x_k$.
The {\em Direction Assumption} used is \cite[(2.4), (2.5)]{ZhangH04}:
\begin{equation} \label{Direction-Assumption}
	 \bfg_k^\top \bfd_k \le - c_1 \| \bfg_k\|^2 \quad
	 \mbox{and} \quad
	 \| \bfd_k\| \le c_2 \| \bfg_k\| ,
\end{equation} 
for some positive constants $c_1$ and $c_2$.
NLSA uses nonmonotone Wolfe conditions or nonmonotone Armijo conditions to select its steplength
$\alpha_k$. To simplify our validation, we use the latter for the demonstration.
The Armijo search inequality on $\alpha_k$ is \cite[(1.4)]{ZhangH04}:
\begin{equation} \label{ZH-Linesearch}
   f(x_k + \alpha_k \bfd_k) \le C_k + \delta \alpha_k \bfg_k^\top \bfd_k \quad \mbox{and} \quad
   \alpha_k \le \mu,
\end{equation} 
where $0< \delta <1$, $\mu>0$, and $C_k$ is updated as follows \cite[(1.6)]{ZhangH04}:
\begin{equation}\label{ZH-QC}
	 Q_{k+1} = \eta_k Q_k + 1, \
	 C_{k+1} = (\eta_k Q_k C_k + f(x_{k+1}))/Q_{k+1}, \
	 C_0 = f(x_0)  \ \mbox{and} \ Q_0 = 1.
\end{equation} 
Here, $\eta_k \in [\eta_{\min}, \eta_{\max}]$ with $0 \le \eta_{\min} \le \eta_{\max} \le 1$. The update on the iterate is $x_{k+1} = x_k + \alpha_k \bfd_k$.

With the above setting, \cite[Lemma~2.1]{ZhangH04} shows that there exists $\underline{\alpha} >0$ such that $\alpha_k \ge \underline{\alpha}$. We now prove all three conditions in (H1)-(H3)
are satisfied with the sequence $\{x_k\}$.

{\bf On H1.} It follows from \cite[(2.8)]{ZhangH04} that for some $\beta>0$
\begin{align*}
	f(x_{k+1}) &\le C_k - \beta \| \bfg_k\|^2   \quad \mbox{(by \cite[(2.8)]{ZhangH04})}\\
	&\le C_k - \frac{\beta}{c_2^2} \| \bfd_k\|^2 \quad \mbox{(by \eqref{Direction-Assumption})}\\
	&= C_k - \frac{\beta}{\alpha_k^2 c_2^2}  \| x_{k+1} - x_k\|^2 .
\end{align*}
Referring to \eqref{ZH-QC},
H1 holds with the following choices of $a_k$ and $\tau_k$. Let
 \[
 a_k :=\beta/( \alpha_{k-1}^2 c_2^2) \stackrel{\eqref{ZH-Linesearch}}{\ge} \beta/( \mu^2c_2^2) =: \underline{a} >0
\] and 
\[
\tau_k := \frac{1}{\eta_{k-1}Q_{k-1}\!+1}\ge 1-\eta_{\rm max} :=\tau >0,
\]
where the third inequality used the fact $Q_k \le 1/(1- \eta_{\max})$ established in \cite[(2.15)]{ZhangH04} when $\eta_{\max} < 1$.

{\bf On H2.} According to the direction assumption \eqref{Direction-Assumption}, for sufficiently large $k$,
\[
 \|g_k\|^2\le -c_1^{-1}g_k^{\top}d_k\le c_1^{-1}\|g_k\|\|d_k\|=(\alpha_kc_1)^{-1}
 \|g_k\|\|x_{k+1}-x_k\|,
 \]
 which implies that condition H2 holds with $k_1=1,b_k = c_1\alpha_k$ for sufficiently large $k$ and $\varepsilon_k\equiv 0$. Since $\alpha_k \ge \underline{\alpha}$, we must have
 $\sum_{k=1}^{\infty}b_k=\infty$. 
 
 {\bf On H3.} Note that $f(x_{k+1}) \le C_k\le C_{k-1}\le\cdots\le C_0=f(x_0)$, the sequence $\{x_k\}_{k\in\mathbb{N}}$ generated by the NLSA is contained in the level set $\{x\in\mathbb{R}^n\,|\,f(x)\le f(x_0)\}$. Hence, whenever the level set is bounded, the sequence $\{x_k\}_{k\in\mathbb{N}}$ is bounded and there exists subsequence $\{x_{k_j}\}_{j\in\mathbb{N}}$ with $\lim\limits_{j\to\infty}x_{k_j}=\overline{x}$ such that $\lim_{j\to\infty}f(x_{k_j})=f(\overline{x})$, so condition H3 holds. Moreover, with the choices $a_k$ and $b_k$ above, we have
\[
 \overline{B} = \sup_{\mathbb{N}\ni k\ge k_1}\frac{1}{b_k}\sum_{i=k-k_1}^{k+k_1}\frac{1}{\sqrt{a_i}}
  = \sup_{\mathbb{N}\ni k\ge k_1}\frac{1}{c_1 \alpha_k}\sum_{i=k-k_1}^{k+k_1}\frac{c_2 \alpha_{i-1}}{\sqrt{\beta}}
  \le (2k_1+1) \frac{ c_2\mu}{c_1\sqrt{\beta}\underline{\alpha}}< \infty,
\]
where the inequality used the fact $\underline{\alpha}\le \alpha_k \le\mu$. 
Therefore, the assumption \eqref{abk-cond} is also satisfied.
Thus,  the NLSA of \cite{ZhangH04} with $\eta_{\max}<1$ falls within our iterative framework for $\varepsilon_k\equiv 0$ whenever the level set $\{x\in\mathbb{R}^n\,|\,f(x)\le f(x_0)\}$ is bounded.   

 Similarly, the iterate sequence $\{x^k\}_{k\in\mathbb{N}}$ generated by the nonmonotone line-search algorithms \cite{Grapiglia17,Sachs11} under the direction assumption there also falls within our iterative framework with $\varepsilon_k\equiv 0$.

\bibliographystyle{siamplain}
\bibliography{references}

\end{document}